\newcommand{\Bc}{\mathbf{c}}
\newcommand{\Bv}{\mathbf{v}}
\newcommand{\Bxi}{{\boldsymbol \xi}}
\newcommand{\Btheta}{{\boldsymbol \theta}}
\newcommand{\Balpha}{{\boldsymbol \alpha}}
\newcommand{\cA}{\mathcal{A}}
\newcommand{\cC}{\mathcal{C}}
\newcommand{\cF}{\mathcal{F}}
\newcommand{\cH}{\mathcal{H}}
\newcommand{\cL}{\mathcal{L}}
\newcommand{\cQ}{\mathcal{Q}}
\newcommand{\cR}{\mathcal{R}}
\newcommand{\cT}{\mathcal{T}}
\newcommand{\cU}{\mathcal{U}}
\newcommand{\cV}{\mathcal{V}}
\newcommand{\cW}{\mathcal{W}}
\newcommand{\ue}{\textup{e}}
\newcommand{\uf}{\textup{f}}
\newcommand{\uh}{\textup{h}}
\newcommand{\ux}{\textup{x}}
\newcommand{\ke}{k_{\textup{e}}}
\newcommand{\kf}{k_{\textup{f}}}
\newcommand{\kh}{k_{\textup{h}}}
\newcommand{\kx}{k_{\textup{x}}}
\newcommand{\act}{{\mathbb{A}}} 
\newcommand{\CC}{{\mathbb{C}}}  
\newcommand{\NN}{{\mathbb{N}}}  
\newcommand{\RR}{{\mathbb{R}}}  
\newcommand{\TT}{{\mathbb{T}}}  
\newcommand{\ZZ}{{\mathbb{Z}}}  
\newcommand{\Lie}{\mathcal{L}}
\renewcommand{\act}{Action-Angle\xspace}
\newcommand{\neigh}{neighborhood\xspace}
\newcommand{\SVN}{San V\~u Ng\d{o}c\xspace}
\newcommand{\smooth}{{\cC^\infty}}   
\newcommand{\hypref}[2]{{\hyperref[#1]{#2~\ref{#1}}}}
\newcommand{\ifwork}[1]{\ifthenelse{\boolean{workmode}}{#1}{}}
\newcommand{\comment}[1]{}
\newcommand{\mute}[1]{}
\newcommand{\printname}[1]{}
    \newtheorem{theorem}{Theorem}[section]
\newtheorem{proposition}[theorem]{Proposition}
\newtheorem{corollary}[theorem]{Corollary}
\newtheorem{lemma}[theorem]{Lemma}
\theoremstyle{plain}
\newtheorem{definition}[theorem]{Definition}
\newtheorem{remark}[theorem]{Remark}
\newtheorem{assumption}[theorem]{Assumption}
\newcommand{\ddt}{\frac{d}{dt}}
\newcommand{\re}{\mathop{\mathfrak{R}e}\nolimits}
\newcommand{\im}{\mathop{\mathfrak{I}m}\nolimits}
\newcommand{\flechebas}[1]{%
  \settoheight{\unitlength}{\mbox{$#1$}}%
  \settowidth{\Taille}{\mbox{~${\scriptstyle #1}$}}%
  \addtolength{\unitlength}{4ex}%
  \begin{picture}(0,1)
    \put(0,1){\vector(0,-1){1}}
    \put(0,0.5){\makebox(0,0){${\scriptstyle #1}$ \hspace{\the\Taille}}}
  \end{picture}}
\newcommand{\flechehaut}[1]{%
  \settoheight{\unitlength}{\mbox{$#1$}}%
  \settowidth{\Taille}{\mbox{~${\scriptstyle #1}$}}%
  \addtolength{\unitlength}{4ex}%
  \begin{picture}(0,1)
    \put(0,0){\vector(0,1){1}}
    \put(0,0.5){\makebox(0,0){\hspace{\the\Taille}${\scriptstyle #1}$ }}
  \end{picture}}
\newcommand{\flechedroite}[1]{%
  \settowidth{\unitlength}{\mbox{$#1$}}
  \settoheight{\Taille}{\mbox{${\scriptstyle #1}$}}
  \addtolength{\Taille}{1ex}
  \addtolength{\unitlength}{4ex}
  \raisebox{0.5ex}{%
  \begin{picture}(1,0)
    \put(0,0){\vector(1,0){1}}
    \put(0.5,0){\makebox(0,0){${\scriptstyle #1}$ \vspace{\the\Taille}}}
  \end{picture}}}
\newcommand{\flechegauche}[1]{%
  \settowidth{\unitlength}{\mbox{$#1$}}
  \settoheight{\Taille}{\mbox{${\scriptstyle #1}$}}
  \addtolength{\Taille}{1ex}
  \addtolength{\unitlength}{4ex}
  \begin{picture}(1,0)
    \put(1,0){\vector(-1,0){1}}
    \put(0.5,0){\makebox(0,0){${\scriptstyle #1}$ \vspace{\the\Taille}}}
  \end{picture}}
\newcommand{\cdv}{Colin de Verdi\`ere}
\newcommand{\ouf}{\vspace{8mm}}
\title{Asymptotics of action variables near semi-toric singularities}
\author{Christophe Wacheux\footnote{ Collaborateur scientifique at EPFL, christophe.wacheux@epfl.ch}}
\begin{document}

\begin{abstract}
The presence of focus-focus singularities in semi-toric integrables Hamiltonian systems is one of the reasons why there cannot exist global \act coordinates on such systems. At focus-focus critical points, the Liouville-Arnold-Mineur theorem does not apply. In particular, the affine structure of the image of the moment map around has non-trivial monodromy. In this article, we establish that the singular behaviour and the multi-vauedness of the Action integrals is given by a complex logarithm. This extends a previous result by \SVN to any dimension. We also calculate the monodromy matrix for these systems. 
\end{abstract}

\begin{keyword}
 
 $37J30$ \sep $37J05$ \sep $53D20$ \sep Semi-Toric systems \sep Moment maps
 
\end{keyword}

\maketitle

\section{Introduction, definitions and notations} \label{section:Introduction_and_def}

Given a symplectic manifold $(M^{2n},\omega)$, an integrable Hamiltonian system (or IHS) can be defined as a function $F = (f_1,\ldots,f_n):M \to \RR^n$ such that its components are Poisson-commuting and whose differential is of maximal rank almost everywhere. From now, $F$ will always designate for us an IHS. A point $p \in M$ such that $dF(p)$ is of rank $n$ is called a regular point for $F$; it is called critical if otherwise, and in particular it is called fixed if $dF(p) = 0$. We shall note the rank $\kx (p)$, or just $\kx$ if the context is obvious.

For IHS, the famous Liouville-Arnold-Mineur theorem provides a particularly appropriate set of local coordinates near regular points, the \act coordinates. It can be formulated by considering the foliation $\cF$ given by the connected components of the fibers of $F$. The theorem states that for regular leaves of $\cF$ (i.e. leaves without critical points), the germ of foliation is locally a fibration by Lagrangian tori.

The problem is that a generic IHS does have critical points on which one cannot apply Liouville-Arnold-Mineur theorem. One question then, is to examine what can be preserved of the inital result for critical points. Another question is to find the largest open subset of the set of regular points of $F$ on which the period bundle can be trivialized, that is, what are the obstructions to having \emph{global} \act coordinates.

Over the last decades, a lot of work has been produced for both questions. The study of non-degenerate critical points of IHS goes back from the works of Birkhoff and Williamson~\cite{Williamson-OnAlgPbLinNF-1936}, to the works of Bolsinov and Fomenko~\cite{BolsinovFomenko-book}, R\"{u}ss, \cdv, Vey~\cite{ColinVey-LemmeMorseIsochore-1979}, Eliasson, Zung, Miranda, \SVN~\cite{SVN-semiglobalinvariants-2003}, Chaperon~\cite{Chaperon-SmoothFFaSimpleProof-2012}\cite{Chaperon-GeoDiff-SingSysDyn-Asterisque-1986} and many other, including the author~\cite{SVNWacheux-SmoothNF_for_IHS_near_ff_sing-2013}. 
Concerning the existence of global \act coordinates, we cite the works of Duistermaat who established among other obstructions to global \act coordinates the monodromy phenomenon which occurs in particular in our case, and explicited the matrix associated to it~\cite{Duistermaat-globalactionangle-1980},\cite{DuistermaatHeckman-variationincohomology-1982}. Dazord and Delzant~\cite{DazordDelzant-Leproblemegeneral-1987} extended the study to coisotropic foliations, while Dufour, Molino and Toulet began to study the case with critical points \cite{DufourMolino-CompactActionRnVarAA-1991},\cite{DufourMolinoToulet-ClassIHSetInvFomenko-1994}. 

This article deals mostly with the first question, yet the two questions are deeply linked. As we shall see, once we have the proper local model to describe what occurs near focus-focus singularities, the monodromy matrix becomes very easy to calculate. We consider mainly singularity of maximal corank (see precise definition in section~\ref{subsection : toric and semi-toric system} for precise definition). Our main result is the existence of suitable coordinates, in which the action integrals near the focus-focus singulaity have a simple expression where the singular behaviour and the multi-valuedness is expressed by a simple complex logarithm (Theorem~\ref{theo:AA_coord_around_FF_value}). With these coordinates, it is possible to compute the monodromy matrix. The two results we establish here extend the result proved in \cite{SVN-semiglobalinvariants-2003} for the case $2n=4$ to higher dimension and with possible elliptic components. 

\ouf

The article is organised as following: first, we set the necessary notions nedded for a precise formulation of the result. Next, we present in details the counterpart of Morse theory for integrable Hamiltonian systems at the local and semi-global scale. Then, we prove our main result, compute the topological monodromy matrix and give a comment to the case with elliptic components.

\section{Statement of the result}

In order to give a precise formulation of our result, we need to recall the notion of non-degeneracy for integrable Hamiltonian systems. We shall discuss the counterpart of the Morse theory that we obtain in that framework. 

\subsection{Critical points in integrable Hamiltonian systems}

Remember first that $\smooth((M,\omega) \to \RR)$ is naturally equipped with a Poisson bracket $\{.,.\}$ such that $(\smooth((M,\omega) \to \RR),\{.,.\})$ is a Lie algebra. At a fixed point $p$ of a function $u \in \smooth(M \to \RR)$, one can associate a quadratic form $\cH[u]_p \in S^2(T_p M)^*$ by taking the Hessian of $u$ in a local set of coordinates. It is well defined (it does not depends of the local coordinates) because $p$ is a fixed point. 

The symplectic form $\omega$ induces a Poisson bracket $\{.,.\}_p$ on $S^2(T_p M)^*$ in the following way: 

\[ \{ \cH[f]_p,\cH[g]_p \}_p := \cH[\{f,g\}]_p,  \]
and $(S^2(T_p M)^*,\{.,.\}_p)$ is a Lie algebra isomorphic to $sp(2n,\RR)$, the Lie algebra of the symplectic group.
Now, reminding that a Cartan subalgebra of a Lie algebra $A$ is a subalgebra of $A$ which is abelian and self-centralizing, we can define the following

\begin{definition}
 A fixed point $p$ for $F$ is said to be \emph{non-degenerate} if $\langle \cH[f_1]_p,\ldots,\cH[f_n]_p \rangle$, the subalgebra spanned by the Hessians of the components of $F$ at $p$, is a Cartan subalgebra of $(S^2(T_p M)^* , \{.,.\}_p)$. 
\end{definition}

Now, to define a non-degeneracy condition for critical points of arbitrary rank, we remark that $S^2(T_p M)^*$ is isomorphic as a Lie algebra to a subalgebra of $\cQ(\RR^{2n} \to \RR)$, the algebra of quadratic forms of $\RR^{2n}$. We consider a critical point $m$ of $M$ for $F$ of rank $\kx$, and we may assume without loss of generality that $df_{n-\kx+1} \wedge \ldots \wedge df_n \neq 0$. We can thus apply Darboux-Caratheodory theorem to the system $\langle f_{n-\kx+1},\ldots,f_n \rangle$: there exists a symplectomorphism $\varphi: (\cU,\omega) \to (\RR^{2n}, \sum_{i=1}^n d\tilde{f_i} \wedge du_i)$ with $\varphi(m) = 0$ and such that $\tilde{f_j} = f_j \circ \varphi^{-1} - f_j(m)$ are canonical coordinates $\xi_j$ for $j \geqslant n-\kx+1$. In these local coordinates, since the $f_j$ are Poisson commuting, $f_1,\ldots,f_{n-\kx}$ do not depend of $x_{n-\kx+1},\ldots,x_n$. We define the function $g_j : \RR^{2(n-\kx)} \to \RR$, $1 \leqslant j \leqslant n - \kx$ as $g_j (\bar{x},\bar{\xi}) = f_j'(\bar{x},0,\bar{\xi},0)$. 

\begin{definition}
 A critical point of rank $\kx$ is called \emph{non-degenerate} if for the $g_i$'s defined above, the Hessians $\cH [g_i]_0$ span a Cartan subalgebra of $(\cQ(\RR^{2(n-\kx)} \to \RR),\varphi_* (\{.,.\}_p))$. A Hamiltonian system is called non-degenerate if all its critical points are non-degenerate. 
\end{definition}


\subsection{Toric and semi-toric systems} \label{subsection : toric and semi-toric system}

Relying on the decomposition of Cartan subalgebras of $sp(2n)$, one can give the following classification result of non-degenerate critical points due to Williamson~\cite{Williamson-OnAlgPbLinNF-1936}:

\begin{theorem}
 
 Let $p \in M$ a nondegenerate critical point for $F$ an IHS. Then there exists a symplectomorphism $\varphi: (M,\omega,p) \to (\RR^{2n},\omega_0 = \sum_{i=1}^n d\xi_i \wedge dx_i,0)$ such that
 \[ \varphi^*F = (\ue_1,\ldots,\ue_{\ke},\uh_1,\ldots,\uh_{\kh},\uf^1_1,\uf^2_1,\ldots,\uf^1_{\kf},\uf^2_{\kf},\xi_{n-\kx+1},\ldots,\xi_n) + o(2) \text{ with:}\]
 
 \begin{itemize}
  \item $\ue_i = x^2_i + \xi^2_i$
  \item $\uh_i = x_i y_i$
  \item $\begin{cases}
         \uf^1_i = x^1_i \xi^1_i + x^2_i \xi^2_i \\
         \uf^2_i = x^1_i \xi^2_i - x^2_i \xi^1_i
        \end{cases}$
 \end{itemize}
 
\end{theorem}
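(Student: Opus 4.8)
The plan is to reduce the statement to a purely linear-algebraic normal form for Cartan subalgebras of the real symplectic Lie algebra, and then transport that normal form back to a symplectomorphism. The Darboux--Carathéodory reduction carried out just above has already straightened the $\kx$ regular directions into the linear coordinates $\xi_{n-\kx+1},\dots,\xi_n$, so it suffices to treat the corank part: I must produce a \emph{linear} symplectomorphism of $\RR^{2m}$ (with $m=n-\kx$) carrying the quadratic parts of the reduced Hamiltonians $g_1,\dots,g_m$ to the listed blocks $\ue_i,\uh_i,\uf^1_i,\uf^2_i$, with all higher-order terms collected in $o(2)$. By hypothesis the Hessians $\cH[g_i]_0$ span a Cartan subalgebra $\mathfrak c$ of $(\cQ(\RR^{2m}\to\RR),\{\cdot,\cdot\})\cong sp(2m,\RR)$, under the standard isomorphism sending a quadratic form to its infinitesimally symplectic generator and intertwining the Poisson and Lie brackets; since $\mathfrak c$ has rank $m$ and the $m$ Hessians span it, they form a basis of $\mathfrak c$.

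First I would analyze $\mathfrak c$ algebraically. Choosing a regular element $X\in\mathfrak c$ and passing to the complexification $\RR^{2m}\otimes\CC$, I would decompose the space into the generalized eigenspaces of $X$. Two constraints pin down the admissible spectra: membership in $sp$ forces the eigenvalues to be invariant under $\lambda\mapsto-\lambda$, while reality of $X$ forces them to be invariant under complex conjugation. Together these leave exactly three patterns — purely imaginary pairs $\pm i\beta$, real pairs $\pm\alpha$, and genuine quadruples $\pm\alpha\pm i\beta$ — which correspond respectively to the elliptic, hyperbolic and focus--focus blocks. The self-centralizing property of $\mathfrak c$ guarantees the eigenvalues are simple enough that the corresponding real invariant subspaces are $2$- or $4$-dimensional and mutually $\omega$-orthogonal, so that $\RR^{2m}$ splits symplectically into blocks on each of which $\mathfrak c$ acts through a one- or two-dimensional abelian algebra.

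Next I would choose, on each block, a symplectic basis adapted to this decomposition; the quadratic forms dual to the restricted Cartan generators then take precisely the standard shapes $\ue_i$, $\uh_i$, and the focus--focus pair $\uf^1_i,\uf^2_i$, once the symplectic volume on each block is normalized. Assembling these bases produces an element $g\in\Sp(2m,\RR)$, and since conjugation by $g$ in $sp$ corresponds to pullback by the linear symplectomorphism $g$, this transports the quadratic parts of the $g_i$ to the listed blocks, up to the linear recombination of components implicit in passing from the basis $\{\cH[g_i]_0\}$ to the canonical basis of $\mathfrak c$ (harmless, as it preserves Poisson-commutativity and the foliation). Composing $g$ with the Darboux--Carathéodory chart and restoring the linear directions $\xi_j$ yields the desired $\varphi$, the discrepancy between $F\circ\varphi$ and the model being higher order, i.e.\ $o(2)$.

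I expect the algebraic classification of the second paragraph to be the main obstacle. Over $\CC$ all Cartan subalgebras of $sp$ are conjugate and the statement is classical, but over $\RR$ several conjugacy classes appear, and it is exactly the interaction between the conjugation action and the symplectic pairing of paired generalized eigenspaces that produces the three block types. Getting the symplectic normalization of each block right — in particular the signs and coefficients distinguishing $\ue_i$ from $-\ue_i$ and fixing the focus--focus pair — is the delicate bookkeeping at the heart of Williamson's theorem.
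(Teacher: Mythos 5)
You should know at the outset that the paper offers no proof of this theorem: it is Williamson's classical result, quoted with a pointer to \cite{Williamson-OnAlgPbLinNF-1936} and to ``the decomposition of Cartan subalgebras of $sp(2n)$'' --- which is exactly the route you take, so your overall strategy is the intended, standard one. The first genuine gap is in your spectral analysis. The trichotomy ``purely imaginary pairs, real pairs, quadruples'' is incomplete: the eigenvalue $\lambda=0$ is also invariant under both $\lambda\mapsto-\lambda$ and complex conjugation, and ruling it out --- equivalently, proving that a regular element of $\mathfrak{c}$ is an invertible, \emph{semisimple} matrix --- is precisely what non-degeneracy has to buy you. Your phrase ``the self-centralizing property of $\mathfrak{c}$ guarantees the eigenvalues are simple enough'' is an assertion, not an argument, and with the paper's literal definition of a Cartan subalgebra (abelian and self-centralizing) it is even false: in $sp(2,\RR)\cong sl(2,\RR)$ the line spanned by a nonzero nilpotent element equals its own centralizer, yet it corresponds to the degenerate Hessian $\xi^2$, for which no elliptic or hyperbolic block exists. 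You need the genuine Cartan property (self-normalizing, equivalently consisting of semisimple elements, e.g.\ via complexification and conjugacy of complex Cartan subalgebras), and since you work with \emph{generalized} eigenspaces you never show the nilpotent parts vanish --- which is the actual heart of the lemma you yourself identify as the main obstacle.

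The second gap is in the final assembly when $\kx>0$. After Darboux--Carath\'eodory, the components $f_i$, $i\le n-\kx$, may still contain \emph{quadratic} terms coupling to the transverse directions: cross terms $\xi_j\,\ell(\bar{x},\bar{\xi})$ with $\ell$ linear, and pure terms $\xi_j\xi_k$. Your linear symplectomorphism $g$ acts only on the reduced factor and touches neither, so the claim that ``the discrepancy is $o(2)$'' fails literally. The cross terms can be removed by a further symplectomorphism (time-one flows of Hamiltonians of the form $\xi_j h(\bar{x},\bar{\xi})$, using the invertibility of the block linearizations), but the $\xi_j\xi_k$ terms cannot be removed symplectically at all: for $F=(x_1^2+\xi_1^2+\xi_2^2,\;\xi_2)$ on $\RR^4$, the quadratic part of $f_1\circ\varphi$ equals $Q\circ L$ with $L$ the linear part of $\varphi$, and $x_1^2+\xi_1^2+\xi_2^2$ has rank $3$ while $\ue_1$ has rank $2$, so no symplectomorphism --- and no constant linear recombination of the components, which is all your ``harmless'' remark provides --- yields $\ue_1+o(2)$. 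Such terms must be absorbed by composing with a local diffeomorphism $G$ of the target $\RR^n$, as in the $G_\Bbbk\circ Q_\Bbbk$ formulation of Theorem~\ref{theo:Eliasson_NF-ST_case}. This is admittedly as much a looseness of the statement as printed (it omits $G$) as of your proof, but a complete argument must either restrict to $\kx=0$, where your recombination remark suffices, or prove the statement in the $G\circ Q_\Bbbk$ form.
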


The theorem introduces three classes of possible ``components'' at a critical point (apart from the regular components $\xi_i$) : elliptic (the $\ue_i$'s), hyperbolic (the $\uh_i$'s), and focus-focus (the couples $\uf_i = (\uf^1_i,\uf^2_i)$). We can thus define the following notations

\begin{definition}

Given $(M,\omega,F)$ an IHS, and we can associate to $m \in M$ the following \emph{Williamson type} (with respect to $F$), or \emph{Williamson index} $\Bbbk = (\ke,\kf,\kh,\kx) \in \NN^4$ with

 \begin{itemize}
 \item $\ke -$ number of \emph{elliptic} (or $E$) components, 
 \item $\kf -$ number of \emph{focus-focus} (or $FF$) components,
 \item $\kh -$ number of \emph{hyperbolic} (or $H$) components,
 \item $\kx -$ number of \emph{transverse} (or $X$) components, that is the regular components.
 
\end{itemize}

We may also use the notation $FF^{\kf}-E^{\ke}-H^{\kh}-X^{\kx}$ instead of $\Bbbk$. We also define 

\[Q_\Bbbk := (\ue_1,\ldots,\ue_{\ke},\uf^1_1,\uf^2_1,\ldots,\uf^1_{\kf},\uf^2_{\kf},\uh_1,\ldots,\uh_{\kh},\xi_1,\ldots,\xi_{\kx})\] 

\end{definition}

Note that the four coefficients are linked by the following equation

\begin{equation} \label{equ:Williamson_type}
 \ke + 2\kf + \kh + \kx = n
\end{equation}

\begin{definition}
The set $\cW(F)$ is defined as the set of different Williamson types that occurs for a given IHS $F$. When equipped with the following relation \[ \Bbbk \preccurlyeq \Bbbk' \text{ if : } \ke \geqslant \ke', \kf \geqslant \kf' \text{ and } \kh \geqslant \kh' ,\] it is a (partially) ordered set (the term \emph{poset} also appears in the litterature). 

\end{definition}

Let us show that $(\cW(F),\preccurlyeq)$ is an ordered set. Let $\Bbbk, \Bbbk', \Bbbk'' \in \cW(F)$.

\begin{itemize}
 \item {\bf reflexivity:} we always have $\ke \geqslant \ke, \kf \geqslant \kf$, and $\kh \geqslant \kh$, thus $\Bbbk \preccurlyeq \Bbbk $,
 \item {\bf antisymetry:} if $\Bbbk \preccurlyeq \Bbbk'$ and $\Bbbk' \preccurlyeq \Bbbk$, then $ \ke \geqslant \ke'$, $\kf \geqslant \kf'$, $\kh \geqslant \kh'$ and $ \ke \leqslant \ke'$, $\kf \leqslant \kf'$, $\kh \leqslant \kh'$, so $ \ke = \ke'$, $\kf = \kf'$, $\kh = \kh'$ and hence, by equation~\ref{equ:Williamson_type} we have $\kx = \kx'$, so $\Bbbk = \Bbbk'$,
 \item {\bf transitivity:} if $\Bbbk \preccurlyeq \Bbbk'$ and $\Bbbk' \preccurlyeq \Bbbk''$ then $ \ke \geqslant \ke' \geqslant \ke'', \kf \geqslant \kf' \geqslant \kf'', \kh \geqslant \kh' \geqslant \kh''$, hence $\Bbbk \preccurlyeq \Bbbk''$.
 \end{itemize}

This is the ordered set involved in the stratification mentioned in section~\ref{subsection:the_general_case}. We can also define consistently the Williamson type of a leaf (see~\ref{subsection:Semi-global_theory}) as follows

\begin{definition} \label{def:Williamson_type_leaf}
Given a leaf $\Lambda \in \cF$, the Williamson type $\Bbbk(\Lambda)$, or $\Bbbk$ if it is unambiguous, is the Williamson type of the point of smallest rank.
\end{definition}

Lastly, we introduce the useful following sets

\begin{definition}
Let $\cU \in M$ be an open set. We define $P_\Bbbk (\cU) \subseteq \cU$ (resp. $L_\Bbbk(\cU) \subseteq M$) is the set of critical points (resp. leaves) of type $\Bbbk$ in $\cU$. Finally, we define $V_\Bbbk(\cU) := F(L_\Bbbk(\cU))$, the set of critical values in $F(\cU)$ of Williamson type $\Bbbk$.

\end{definition}

\begin{remark}
In the above definition, $P_\Bbbk$ (resp. $L_\Bbbk$, resp. $V_\Bbbk$) are covariant functors from the category of open sets with inclusions as morphisms, to the categories of subsets of $M$( resp. union of leaves of $\cF$, resp. subsets of $F(M)$), with inclusions as morphisms.
Since the fibers of the moment map are not a priori connected, a critical value may belong to $V_\Bbbk (\cU)$ and $V_{\Bbbk'} (\cV)$ with $\Bbbk$ and $\Bbbk'$ possibly different, and $\cU$ and $\cV$ possibly disjoint.

\end{remark}

The Williamson type is a symplectic invariant. The aim of this article is to examine the asymptotic behavior of \act coordinates when getting close to a singularity with one focus-focus component and no hyperbolic component. 

This question is actually motivated by a long-term program of classification of IHS based on their dynamical behaviour. While such classification for general systems is out of reach for now, partial results exists for subclasses of IHS. To formulate some of these results, we introduce here a criterium called ``complexity''. The notion of complexity find its origins in the works of Karshon an Tolman~\cite{KarshonTolman-Centeredcomplexity1HamTorusAction-2001}\cite{KarshonTolman-CompleteinvariantsforHamT_actions_tall-2003}\cite{KarshonTolman-ClassificationofHamiltonian-2011}, Margaret Symington and Leung~\cite{Symington-4from2-2001}, \cite{LeungSymington-Almosttoricsymplectic-2010}, and of \SVN in~\cite{SVN-Momentpolytopessymplectic-2007}.

\begin{definition}
 Let $F=(f_1,\ldots,f_n):(M^{2n},\omega) \to \RR^n$ be an integrable Hamiltonian system. It is said to be \emph{almost-toric} of complexity $c \leqslant n$ if (up to a global permutation of the components of $F$), every critical point verifies these conditions:
 \begin{itemize}
  \item all critical points are non-degenerate.
      
  \item there are no singularities of hyperbolic type: $\kh = 0$,
  \item the function $\check{F}^c=(f_{c+1},\ldots,f_n)$ generates a \emph{global} $\TT^{n-c}$-action.
  \end{itemize}

 An almost-toric system is called \emph{semi-toric} if $c=1$, and \emph{toric} if $c = 0$. For a semi-toric system, $\check{F}:=(f_2,\ldots,f_n)$ is the map that generates the $\TT^{n-1}$-action.
 
\end{definition} 
 
 The classification of toric IHS begins with Liouville Arnold Mineur theorem : the \act coordinates provide an (integral) affine structure on the base space. It was shown in 1982, simultaneously by Atiyah~\cite{Atiyah-ConvexityandCommuting-1982}, and Guillemin \& Sternberg~\cite{Atiyah-ConvexityandCommuting-1982}, \cite{GuilleminSternberg-ConvexitypropertiesI-1982}, \cite{GuilleminSternberg-ConvexitypropertiesII-1984}, that for an Hamiltonian $\TT^d$-action, the image of the moment map is a rational convex polytope in $\RR^d$. Delzant, in 1988, gave a constructive proof that, in the completely integrable case $d=n$, if the action is effective, the polytope completely determines the system, thus completing the classification for toric IHS \cite{Delzant-Hamiltoniensperiodiqueset-1988} \cite{Delzant-ClassificationActions-1990}.

 
 For the semi-toric case though, the image of moment map is not a polytope, and it is not enough to classify such IHS. However, \SVN and Pelayo use it to get a classification ``\`{a} la Delzant'' for semi-toric systems in dimension $2n=4$~\cite{SVN-semiglobalinvariants-2003},\cite{PelayoSVN-Semitoricintegrablesystems-2009},\cite{PelayoSVN-ConstructingIntSysOfSemitoricType-2011}. Among the other classifying invariants they introduce, there is a formal series that is explicited as the Taylor expansion of regularized Action coordinate, thus describing, in a way, how the Lagrangian fibration pinches at a semi-toric singularity. The principal goal of this article is to give the general formula of this invariant, for any semi-toric critical point in any dimension.
 
 \subsection{The main result}
 
 A semi-toric IHS only has elliptic critical points, which are well understood from the study of toric system, and critical points of Williamson type $FF-X^{\kx}-E^{\ke}$. From~\cite{Wacheux-LocalmodelsofS-TIntSys-2014} we know the set $\Gamma := V_{FF-X^{\kx}-E^{\ke}}$ is a $\kx$-dimensional submanifold. We note $D^r In t$ the disk of dimension $r$. In this article, we prove the following result. 
 
\begin{theorem} \label{theo:AA_coord_around_FF_value}
 Let $(M^{2n},\omega,F=(f_1,\ldots,f_n))$ be a semi-toric integrable system with $n$ degrees of freedom. Let $m$ be a $FF-X^{n-2}$ critical point, $\Lambda_0$ the leaf containing $m$ and $\cU(\Lambda_0)$ (a germ of) a tubular \neigh of $\Lambda_0$ such that $V_{X^n}(\cU(\Lambda_0))$ is simply connected.
 
 There exists (a germ of) a tubular \neigh $W \subseteq \RR^n$ of $W_0 := \{0\} \times \{0\} \times D^{n-2}$, a local diffeomorphism
 \[G: (W, W_0) \to (F(\cU(\Lambda_0)),\Gamma) ,\] and a symplectomorphism 
 \[ \varphi: \left( L_{X^n} (\cU(\Lambda_0)) , \omega \right) \to \left( (W \setminus W_0) \times \TT^n, \tilde{\omega} \right) \]
 
 such that if we write $\varphi = (I_1,\ldots,I_n,\theta_1,\ldots,\theta_n)$ we have
 
 \begin{enumerate}
  \item The coordinates $(\theta_3,\ldots,\theta_n,I_3,\ldots,I_n)$ can be extended to partial \act coordinates on $\cU(\Lambda_0)$.
  \item We have that
  \[
  \begin{aligned}
  I_1(\Bv) & = S(\Bv) - \re (w \ln(w) - w) \\
  I_2(\Bv) & = v_2 \\
  I_3 & = v_3 \\
  \ & \ \vdots \\
  I_n & = v_n
  \end{aligned}
  \]
  where $\Bv = (v_1,\ldots,v_n) = G^{-1} \circ F$, $w = v_1 + i v_2$, $\ln$ is a determination of the complex logarithm on $W_{v_1,v_2} = W \cap \{ v_3 = \ldots = v_n = 0 \} \subseteq \RR^2 \simeq \CC$, and $S$ a smooth function of $\Bv$.
 \end{enumerate}
 
\end{theorem}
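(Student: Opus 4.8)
The plan is to reduce the statement to the four-dimensional focus-focus computation of \cite{SVN-semiglobalinvariants-2003} by peeling off the $n-2$ regular directions, and then to recover the logarithmic singularity of $I_1$ from a period/residue analysis in the complexified focus-focus variable $w = v_1 + i v_2$.

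First I would construct the partial \act coordinates attached to the transverse block. Since $m$ is of type $FF-X^{n-2}$ and the system is semi-toric, the germ of foliation along $\Lambda_0$ splits, at the semi-global level, as the product of a four-dimensional focus-focus foliation with a regular Lagrangian fibration in the $n-2$ transverse directions; this is the content of the semi-global normal form of \cite{Wacheux-LocalmodelsofS-TIntSys-2014} (building on Eliasson and Miranda-Zung). Applying Liouville-Arnold-Mineur to the regular factor produces action-angle coordinates $(I_3,\ldots,I_n,\theta_3,\ldots,\theta_n)$ that extend smoothly across $\Lambda_0$ to all of $\cU(\Lambda_0)$, which gives assertion (1) and the equalities $I_j = v_j$ for $j \geq 3$ after composing with the diffeomorphism $G$ that trivializes the transverse directions. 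This reduces the problem to the pure focus-focus situation in the four-dimensional symplectic quotient.

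Next I would identify $I_2$. The global $S^1$-action underlying the semi-toric structure is generated by a smooth momentum that is transverse to $\Gamma$ and restricts to the focus-focus rotation $\uf^2$; normalizing it gives a bona fide smooth action coordinate, hence $I_2 = v_2$ after adjusting $G$. The remaining action $I_1$ is the transverse one, $I_1 = \frac{1}{2\pi}\oint_{\gamma_1}\alpha$ for a primitive $\alpha$ of $\tilde\omega$ and a cycle $\gamma_1$ completing the $S^1$-orbit to a basis of $H_1$ of the regular torus. Here I would run the \SVN argument: set $w = v_1 + i v_2$ and compute the complex return time $\tau(w)$ needed for the flow of the complexified focus-focus Hamiltonian to bring a point on a fixed transversal back onto its $S^1$-orbit. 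A residue computation at the focus-focus value shows $\tau(w) = -\ln w + (\text{holomorphic})$, and the action-angle relation identifying the complex combination $\partial_{v_1} I_1 - i\,\partial_{v_2} I_1$ with $\tau(w)$ up to smooth terms integrates, using $\frac{d}{dw}(w\ln w - w) = \ln w$, to $I_1 = S(\Bv) - \re(w\ln w - w)$ with $S$ smooth. The multivaluedness of the chosen determination of $\ln$ is precisely the source of the monodromy computed afterwards.

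The main obstacle is the first step: one must show that the singular, logarithmic behaviour is confined to the two-dimensional variable $w$ and does not leak into the transverse directions, i.e. that the semi-global splitting is genuinely symplectic and that the transverse actions extend across the pinched leaf $\Lambda_0$. This requires the full strength of the semi-global normal form near a focus-focus leaf carrying additional regular (and possibly elliptic) blocks, together with a careful check that the primitive $\alpha$ and the homology basis $(\gamma_1,\ldots,\gamma_n)$ can be chosen continuously over $W \setminus W_0$, so that the residue computation applies uniformly in the transverse parameters and the error absorbed into $S$ stays smooth up to $W_0$.
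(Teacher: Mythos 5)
Your outline follows essentially the same route as the paper: partial \act coordinates for the transverse block from the Miranda--Zung/Zung normal forms, $I_2 = v_2$ from the global $S^1$-action, and for $I_1$ a generalization of \SVN's return-time analysis in $w = v_1 + i v_2$, integrated with $\frac{d}{dw}(w\ln w - w) = \ln w$. But there is one concrete misdirection in how you propose to close what you call the main obstacle. You say one must show that ``the semi-global splitting is genuinely symplectic''. Pursued literally, this step would fail: Zung's semi-global product decomposition (item 2 of Theorem~\ref{theo:A-L_with_sing}) is only \emph{leaf-wise diffeomorphic}, and the paper explicitly remarks that it is not symplectic in general. The proof does not need it, and does not pass to a symplectic quotient either. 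The symplectic product structure is invoked only on the Miranda--Zung neighborhood $\cU_{MZ}$ of the critical \emph{orbit} (Theorem~\ref{theo:Eliasson_NF-ST_case}), and each period is split into an inner return time, computed exactly in the linear model between the transversals $\Sigma_s^\Balpha$ and $\Sigma_u^\Balpha$ inside $\cU_{MZ}$ --- giving $\tau^{B,A}_1 + i\tau^{B,A}_2 = \ln(\varepsilon^2) - \ln(\bar w)$, independent of $(v_3,\ldots,v_n)$ precisely because the MZ model is a direct product, and $\tau^{B,A}_j = 0$ for $j \geqslant 3$ --- plus an outer travel time $\tau^{A,B}$ between the same transversals, which is smooth in $\Bv$ up to $\Gamma$ because that portion of the trajectory stays away from the singular locus. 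This inner/outer splitting is what confines the logarithm to the variable $w$ and gives the uniform smoothness in the transverse parameters that you correctly identify as the crux; no semi-global symplectic splitting is available or required.

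Two smaller corrections. First, the regular remainder of the complex return time is only \emph{smooth}, not holomorphic as your ``residue computation'' phrasing suggests: $\tau^{A,B}_1 + i\tau^{A,B}_2$ has no reason to satisfy Cauchy--Riemann in $w$. Only smoothness is used when integrating, so this is harmless, but note also the conjugate $\bar w$ in the inner time, which is what produces the signs $\sigma_1 = \tau_1 + \re(\ln w)$ and $\sigma_2 = \tau_2 - \im(\ln w)$ in Proposition~\ref{prop:regular_1-form}. Second, the ``action-angle relation'' $\partial_{v_i} I_1 = \tau_i$ that you invoke as standard is the exactness of the period form $\tau = \sum_i \tau_i\, dv_i$, which the paper does not take for granted: it is proved via the Darboux--Weinstein identification of Lagrangian deformations with closed $1$-forms (Lemma~\ref{lemma:identify_Lag_1-form} and equation~\eqref{equ:deformation=action}), after which the regularized form $\sigma$ is closed on the regular values, extends by continuity across $\Gamma$, and its primitive is the smooth function $S$. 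With these adjustments your proposal matches the paper's argument.
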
 
 
 From this therorem in the case of maximal codimension, we treat the general case with possible elliptic components. While it is quite intuitive, we shall define precisely the meaning of ``partial \act coordinates'' in section~\ref{subsection:Semi-global_theory}. To prove this result, some genericity assumptions are necessary, which we shall indicate during the proof.
 
 This result was obtained during my Ph.D. thesis. We present it here in the simplest manner, with the addition of the case wwith elliptic components
 
\section{A symplectic Morse theory for integrable Hamiltonian systems}

The non-degeneracy condition we defined calls for an equivalent of the Morse theory in the symplectic framework and integrable Hamiltonian systems. We shall see that there are different level at which we can establish counterparts of classical results in Morse theory. 

\subsection{ Local and orbital theory} \label{subsection:local_and_orbital_theory}

The first difference with classical Morse theory is that instead of a single function, we have a family of $n$ real-functions. We exploit the relations between the functions to get a normal form theorem, due to Eliasson. The version of the theorem given here is an extension of the original theorem which incorporates partial \act coordinates for the transverse components. This version is due to Miranda and Zung~\cite{MirandaZung-Equivariantnormalform-2004}. Also, we give the version only for semi-toric singularites 

\begin{theorem}[Eliasson-Miranda-Zung Normal Form - Semi-toric case] \label{theo:Eliasson_NF-ST_case}
 Let $(M^{2n},\omega,F)$ a semi-toric integrable system with $n \geqslant 2$, $F$ an IHS. Let $m$ be a critical point of Williamson type $\Bbbk = FF - E^{\ke} - X^{\kx}$ and $F_X := (f_{n-\kx+1},\ldots,f_n)$ the transverse components in Williamson decomposition of $F$ at $m$ that provide a global $\TT^{n-2}$-action on $M$.  
 
 Then there exists a triplet $(\cU_m,\varphi_\Bbbk,G_\Bbbk)$ with $\cU_m$ an open \neigh of $m$ saturated with respect to the orbit $\cT$ of $F_X$, $\varphi_\Bbbk$ a symplectomorphism of $\cU_m$ to a \neigh of $(0 \in \RR^{2n} , \omega_0)$ that sends the orbit $\cT \simeq \TT^{\kx}$ of $m$ to $O$, and $G_{\Bbbk}$ a local diffeomorphism of $0 \in \RR^n$ such that:
 
 \[ \varphi^*_{\Bbbk} F = G_{\Bbbk} \circ Q_{\Bbbk} \]
  
\end{theorem}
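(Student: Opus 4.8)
The plan is to prove the statement in three stages: straighten the transverse components to obtain partial \act coordinates and split off a cotangent torus, linearize the residual focus-focus--elliptic singularity by Eliasson's theorem, and finally make the whole construction equivariant on a saturated \neigh of the orbit. \textbf{Step 1 (transverse reduction).} The components $F_X=(f_{n-\kx+1},\dots,f_n)$ have linearly independent differentials at $m$, Poisson-commute, and generate a free $\TT^{\kx}$-action near the orbit $\cT$ of $m$. First I would apply the Darboux--Carath\'eodory theorem, exactly as in the non-degeneracy discussion above, to produce canonical conjugate coordinates in which the $f_{n-\kx+j}$ become momenta $\xi$; periodicity of the flows turns the conjugate coordinates into genuine angles, yielding partial \act coordinates for the regular directions. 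Equivalently, I would invoke the Marle--Guillemin--Sternberg symplectic slice theorem for the free $\TT^{\kx}$-action to obtain an equivariant symplectomorphism of a saturated \neigh of $\cT$ onto $T^*\TT^{\kx}\times\Sigma$ with its product form, where $\TT^{\kx}$ acts by translation on the base and trivially on a symplectic slice $\Sigma$ of dimension $2(n-\kx)$. On $\Sigma$ the remaining functions restrict to an integrable system $(\bar f_1,\dots,\bar f_{n-\kx})$ with a non-degenerate critical point $\bar m$ of pure type $FF-E^{\ke}$, while the $\xi$ become the momentum coordinates on the $T^*\TT^{\kx}$ factor.

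\textbf{Step 2 (linearization on the slice).} On $\Sigma$ the problem is now purely local, with no transverse directions. By Williamson's theorem the Hessians of $(\bar f_1,\dots,\bar f_{n-\kx})$ at $\bar m$ are linearly conjugate to the quadratic model $(\ue_1,\dots,\ue_{\ke},\uf^1,\uf^2)$, and Eliasson's linearization theorem upgrades this to an honest local symplectomorphism $\psi$ of $(\Sigma,\bar m)$ together with a local diffeomorphism $g$ of $0\in\RR^{\ke+2}$ satisfying $\psi^*(\bar f_1,\dots,\bar f_{n-\kx})=g\circ(\ue_1,\dots,\ue_{\ke},\uf^1,\uf^2)$. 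This is the deep analytic input, which I would invoke as a black box in the focus-focus--elliptic setting.

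\textbf{Step 3 (equivariant reassembly).} Taking $\varphi_\Bbbk=\psi\times\id$ on $\Sigma\times T^*\TT^{\kx}$ and assembling $g$ with the identity on the $\xi$-coordinates yields a local diffeomorphism $G_\Bbbk$ of $0\in\RR^n$ with $\varphi^*_\Bbbk F=G_\Bbbk\circ Q_\Bbbk$. To guarantee that $\cU_m$ is saturated and that $\cT$ is sent to the model orbit $O$, I would average $\psi$ over the $\TT^{\kx}$-action (which acts trivially on $\Sigma$ in the slice model, so the averaging is harmless) and correct the resulting symplectic form by an equivariant Moser argument, exploiting that the action preserves every $f_j$ and $\omega$. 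The averaged map is then a $\TT^{\kx}$-equivariant symplectomorphism extending over the whole orbit \neigh, which gives the required triple $(\cU_m,\varphi_\Bbbk,G_\Bbbk)$.

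\textbf{Main obstacle.} The crux is twofold. First, Step 2 rests on Eliasson's linearization of a focus-focus block, which is genuinely analytic and admits no soft proof. Second, in Steps 1 and 3 one must carry out the slice reduction, the normal form, and the Moser correction all simultaneously $\TT^{\kx}$-equivariantly, and in particular reconcile the transverse torus with the residual $S^1$-symmetry generated by $\uf^2$ inside the focus-focus block, so that the two commuting circle actions are linearized at once and $\varphi_\Bbbk$ is globally defined on a saturated \neigh.
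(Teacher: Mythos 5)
The paper itself contains no proof of this statement: it is quoted as the orbital normal form of Miranda and Zung \cite{MirandaZung-Equivariantnormalform-2004}, built on Eliasson's pointwise linearization, so your attempt has to be judged as a reconstruction of that proof rather than compared to an in-paper argument. Your overall architecture (split off $T^*\TT^{\kx}$ along the orbit, linearize the residual $FF$--$E^{\ke}$ block by Eliasson, reassemble equivariantly with an averaging-plus-Moser correction) is the right one, and you correctly identify Eliasson's linearization as the irreducible analytic input.

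There is, however, a genuine gap in the reassembly. After the splitting of a saturated neighborhood as $T^*\TT^{\kx}\times\Sigma$, the non-transverse components $f_1,\dots,f_{n-\kx}$ are $\TT^{\kx}$-invariant, but they still depend on the momenta $\xi$ and not only on the slice variables: $\Sigma$ is only the level $\{\xi=0\}$. Applying Eliasson on $\Sigma$ therefore normalizes the system on a single reduced leaf, and your Step 3, which takes $\varphi_\Bbbk=\psi\times\id$ and assembles $G_\Bbbk=g\times\id$, yields $\varphi^*_\Bbbk F=G_\Bbbk\circ Q_\Bbbk$ only in the special case where the $f_j$ are independent of $\xi$; the theorem lets $G_\Bbbk$ be a diffeomorphism mixing all $n$ variables precisely because generically they are not. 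To close this you need a parametric version of Eliasson's normal form, smooth in the parameter $\xi$, and then a further correction: a $\xi$-family of symplectomorphisms of $\Sigma$ is not a symplectomorphism of the product, since mixed terms pairing $d\xi_i$ with slice directions appear in the pulled-back form, and the Moser path curing them must be tangent to the fibers of $Q_\Bbbk$ so as not to destroy the relation $\varphi^*F=G\circ Q$ already achieved. Handling parameters, symplecticity and $\TT^{\kx}$-equivariance simultaneously is exactly the content of Miranda--Zung's proof, and it is the step your sketch elides (naive averaging of $\psi$ does not stay symplectic, as you partly acknowledge). A smaller point: the $\TT^{\kx}$-action on the orbit $\cT$ may a priori have finite isotropy, so the slice model is in general a quotient of $T^*\TT^{\kx}\times\Sigma$ by a finite group acting freely; you should either argue that this twist is trivial in the semi-toric setting, using the global torus action, or weaken the claimed direct-product splitting accordingly.
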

 
This is a counterpart of the Morse lemma, here adapted to the symplectic context: in a \neigh of a non-degenerate critical point, an integrable system is, up to a regular change of coordinates, a function of of the quadratic parts determined by the Williamson type of the critical point. As we said before, we have here $\kx$ \act coordinates, thus allowing us to get a normal form on a ``wider'' open set. This is what one may call a semi-local, or an orbital result.

It was the contribution of many people that allowed eventually the statement and proof of the original theorem of Eliasson for fixed points. The first works to be cited here are those of Birkhoff, Vey~\cite{Vey-SurCertainsSystemes-1978}, Colin de Verdi\`{e}re and Vey~\cite{ColinVey-LemmeMorseIsochore-1979}, and of course Eliasson in~\cite{Eliasson-Thesis-1984} and~\cite{Eliasson-NormalformsHamiltonian-1990}. More recently, Chaperon in~\cite{Chaperon-SmoothFFaSimpleProof-2012} and~\cite{Chaperon-GeoDiff-SingSysDyn-Asterisque-1986}, Zung in \cite{Zung-Anoteonfocusfocus-1997} and~\cite{Zung-Anothernotefocus-2002}, and \SVN \& the author in~\cite{SVNWacheux-SmoothNF_for_IHS_near_ff_sing-2013} provided new proofs and filled the technical gaps that remained in the original proof. Miranda and Zung in \cite{MirandaZung-Equivariantnormalform-2004}, relying on Eliasson local normal form provided the orbital version. 

Eliasson normal form allows us to visualize the image of a neighborhoud of non-degenerate critical points. Here is a picture of the different sets of critical values that can occur in dimension $2n=4$.

\begin{figure}[ht]
\centering
\includegraphics[width=\linewidth]{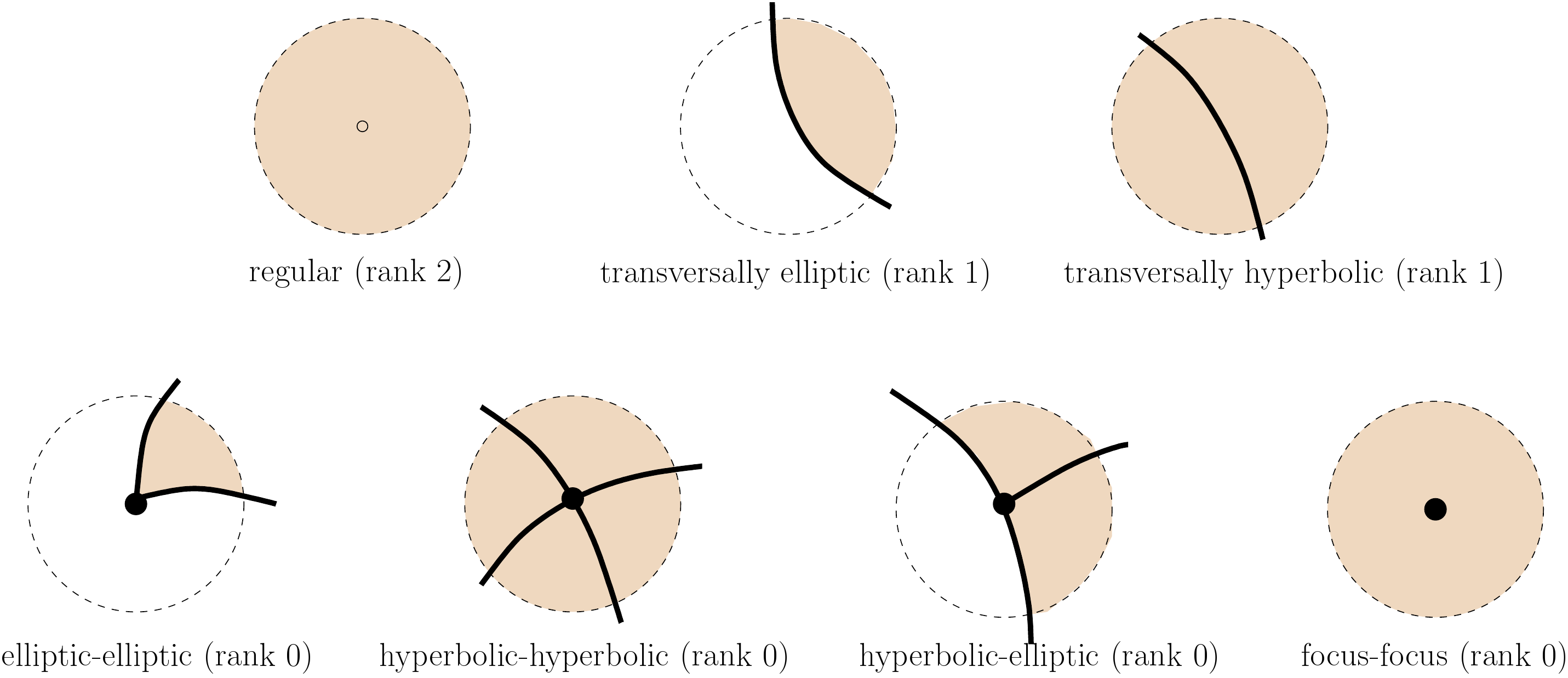}
 \caption{Local models of critical points in dimension 2, \cite{PelayoRatiuSVN-SymplecticBifTheoryForIntegrableSystems-2011}}
\end{figure}



\subsubsection{ Dynamic of the local models } \label{subsection:linear_dynamic}

A description of the Lagrangian torus foliation near a semi-toric leaf starts with the study of the dynamic near the critical point. For each type of critical component, it is always possible to introduce a local model using complex coordinates. For elliptic and hyperbolic components, we introduce natural complex coordinates established after Darboux coordinates $z = x + i\xi$, along with their respective elliptic $\ue = z\bar{z})$ and hyperbolic $\uh = \im(-\frac{1}{2} z^2)$ Hamiltonian. However, for focus-focus components we take the following 4-dimensional model: 

\[ \begin{aligned}
(\RR^4,0) & \xrightarrow{ \; \cong \;} (\CC^2,0) \\
(x_1,x_2,\xi_1,\xi2) & \longmapsto ( \underbrace{x_1 + i x_2}_{=: z_1} , \underbrace{\xi_1 + i\xi_2}_{=: z_2})  \\
\omega = dx_1 \wedge d\xi_1 + dx_2 \wedge d\xi_2 & \longmapsto \omega = \re(d\bar{z_1} \wedge dz_2)\\
\uf = (\uf^1,\uf^2) & \longmapsto \uf^1 + i \uf^2 = \bar{z_1}z_2 . 
\end{aligned}
\]

We summarize in the array below some of the dynamical properties of each component. 

\begin{table}[ht] 
\begin{tabular}{|p{2.2cm}|p{4cm}|p{5.9cm}|}
 \hline
 Component & Critical leaf & Expression of the flow in local coordinates \\
 \hline 
 Elliptic & $\{ \ue = 0 \}$ is a point in $\RR^2$ & $\phi_{\ue}^t : z^\ue \mapsto e^{it} z^\ue$ \\
 \hline
 Focus-focus & $\{ f^1 = f^2 = 0 \}$ is the union of the planes $\{x_1 = x_2 = 0 \}$ and $\{\xi_1 = \xi_2 = 0 \}$ in $\RR^4$ & $\phi_{f^1}^t : (z^f_1,z^f_2) \mapsto (e^{-t} z^f_1, e^t z^f_2)$ and 
 $\phi_{f^2}^t : (z^f_1,z^f_2) \mapsto (e^{it} z^f_1, e^{it} z^f_2)$ \\
 \hline
\end{tabular}
\caption{Properties of elementary blocks}
\label{tab:properties_of_crit_points}

\end{table}

Note also that $\uf$, which is here the complex counterpart to the Hamiltonian, reminds of a hyperbolic singularity. This is why focus-focus critical points are also called ``complex hyperbolic'' or also ``loxodromic'' in the literrature. 

We give below a representation of the dynamic near an elliptic and a focus-focus critical point. For the elliptic case, the vector field $\chi_\ue$ is just the rotation around the critical point. For the focus-focus case, each vector field acts simultaneously on the two complex planes. The ``pseudo-hyperbolic'' field $\chi_{\uf^1}$ is along radial trajectories on each plane, that is, half-lines starting at the focus-focus critical point, while the ``pseudo-elliptic'' field $\chi_{\uf^2}$ is just the rotation around the focus-focus critical point. Here is a picture of these two fields in dimension $2n=4$, with $\chi_1 = \chi_{f^1}$ and $\chi_2 = \chi_{f^2}$.

\begin{figure}[ht]
\centering
 \includegraphics[width=0.6\linewidth]{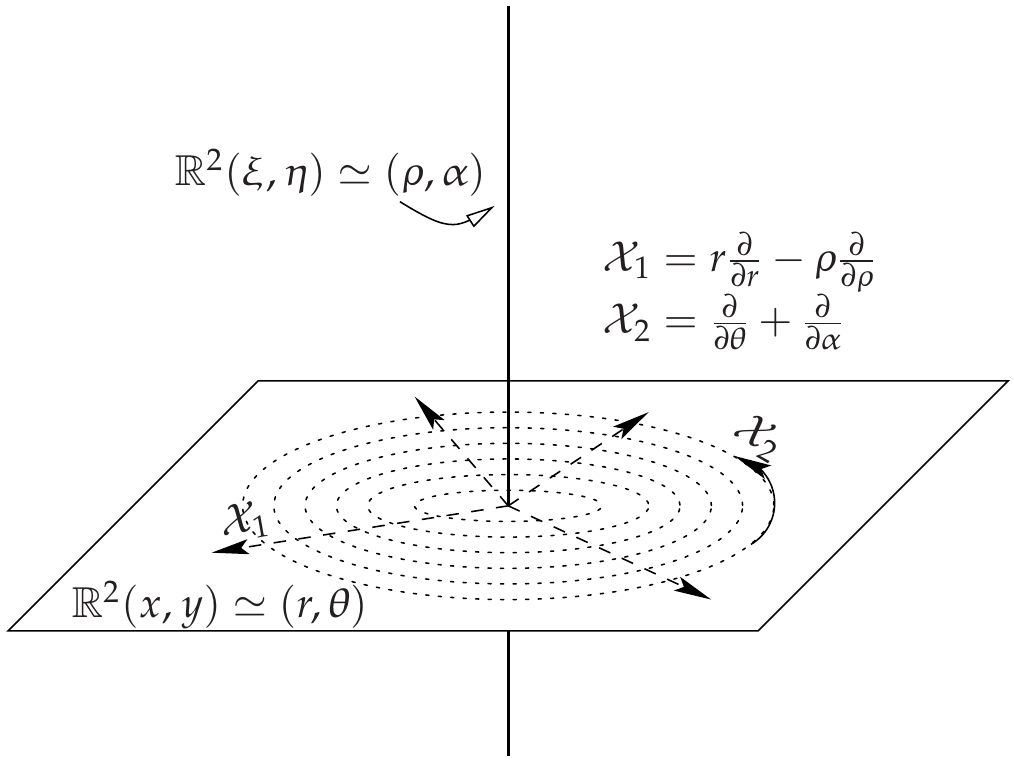}
 \caption{Linear model of focus-focus critical point, \cite{SVN-BohrSommerfeld-2000} }
\end{figure}

\newpage

\subsection{Semi-global (leaf-wise) theory} \label{subsection:Semi-global_theory}

 Our goal here is to formulate a version of the normal form for a critical leaf, and the foliation near it, that is, a Liouville-Arnold-Mineur theorem for critical points. There are several problems we need to examine in order to be able to formulate  The first problem is the following. In symplectic geometry, the orbit of a point $m$ by the $\RR^n$ action induced by the moment map is contained in the leaf that contains $m$, and it is easy to see that each point of an orbit has the same Williamson type. However, while for elliptic critical points the orbit is exactly the leaf, for hyperbolic and focus-focus critical points this is not the case. In the semi-toric case in dimension $2n=4$, for instance, the leaf containing the focus-focus point contains also points for which the action is of rank 2 (in the local model, they are the points for which $z_1$ or $z_2$ is not zero). 
 
 Fortunately, in~\cite{Zung-SymplecticTopology_I-1996}, Zung describes the stratification of a given leaf $\Lambda \in \cF$ by the orbits of its points, and proves that all the orbits with lowest rank have the same Williamson type, and it is thus an invariant of the leaf. This justifies Definition~\ref{def:Williamson_type_leaf}

 
 One important point also when we consider semi-toric leaves, is that a semi-toric leaf (a leaf that contains a critical point with $\kf =1$) may carry more than one semi-toric orbit. We will not consider this case for the present time
 
\begin{assumption}
 From now on, for $F$ a semi-toric IHS, the leaves of $F$ will only carry at most one semi-toric orbit.
\end{assumption}

In order to expose Arnol'd-Liouville theorem for critical points, we need some definitions.
 
\begin{definition}
 A (non-degenerate) \emph{singularity} shall be defined as (a germ of) a tubular \neigh of a (non-degenerate) leaf. We adopt here the following notation: for $c$ in $B$ the base space of $\cF$ and $\pi_\cF:M \to B$ the projection map, $\Lambda_c := \pi_\cF^{-1}(c)$ is the leaf of all points in $M$ over $c$ in the fibration, and $\cU(\Lambda_c)$ a tubular \neigh of $\Lambda_c$.
 
 Two singularities are isomorphic if they are leaf-wise isomorphic.
\end{definition}

There is a mild assumption concerning critical leaves that is required to formulate the central result of the theorem.

\begin{definition}
 A non-degenerate critical leaf $\Lambda$ is called \emph{topologically stable} if there exists a tubular \neigh $\cV$ of $\Lambda$ and a $\cU \subset \cV(\Lambda)$ a small \neigh of a point $m$ of minimal rank, such that 
\[ \forall \Bbbk \in \cW (F),\ V_\Bbbk (\cV(\Lambda))  = V_\Bbbk (\cU)) \ . \]

 An integrable system will be called \emph{topologically stable} if all its critical points are non-degenerate and topologically stable.
\end{definition}

\begin{assumption} \label{assumption:topologically_stable}
 From now on, all of our systems will be topologically stable.
\end{assumption}

The assumption of topological stability rules out some pathological behaviours that can occur for general foliations. Note however that for all known examples, the non-degenerate critical leaves are all topologically stable, and it is conjectured that this is also the case for all analytic systems. 




\begin{definition} \label{def:simple_leaf}

\begin{itemize} 
 
 \item A singularity is called of \emph{(simple) elliptic type} if it is isomorphic to $\cL^e$: a plane $\RR^2$ foliated by $e = x^2 + \xi^2$.
 \item A singularity is called of \emph{(simple) focus-focus type} if it is isomorphic to $\cL^f$, where $\cL^f$ is given by $\RR^4$ locally foliated by $f^1 = x_1 x_2 + \xi_1 \xi_2$ and $f^2 = x_1 \xi_2 - x_2 \xi_1$. 
\end{itemize}
\end{definition}

Topological properties of simple elliptic, hyperbolic and focus-focus singularities are discussed in details in~\cite{Zung-SymplecticTopology_I-1996},\cite{Zung-Anoteonfocusfocus-1997},\cite{Zung-Anothernotefocus-2002},\cite{SVN-BohrSommerfeld-2000}. In particular, one can show that the focus-focus critical leaf must be homeomorphic to a pinched torus that we note $\check{\TT}^2$; it is topologically equivalent to a 2-sphere with two points identified. The regular leaves around are regular tori.


\subsubsection{Singular Arnold-Liouville theorem}

Now we can formulate an extension of Liouville-Arnold-Mineur theorem to singular leaves. We call the next theorem a ``leaf-wise'' result, as the results given hold for a \emph{leaf} of the system. However assertion $2.$ of the theorem does not extend Eliasson normal form, since here the normal form of the leaf doesn't preserve the symplectic structure. Again, we only give the semi-toric version.

\begin{theorem}[Arnold-Liouville with semi-toric singularities,~\cite{Zung-SymplecticTopology_I-1996}] \label{theo:A-L_with_sing}
Let $F$ be a proper semi-toric system, $\Lambda$ be a non-degenerate critical leaf of Williamson type $\Bbbk$ and $\cV(\Lambda)$ a tubular \neigh of $\Lambda$.

Then the following statements are true:
\begin{enumerate}
 \item There exists an effective Hamiltonian action of $\TT^{\ke +\kf +\kx}$ on $\cV(\Lambda)$. There is a locally free $\TT^{\kx}$-subaction. The number $\ke + \kf + \kx$ is the maximal possible for an effective Hamiltonian action.
 \item If $\Lambda$ is topologically stable, on $(\cV(\Lambda)$ the foliation $\cF$ is leaf-wise homeomorphic (and even diffeomorphic) to a product of elliptic and focus-focus simple singularities:
 
 \[ (\cV(\Lambda),\cF) \simeq ( \cU(\TT^{\kx}),\cF_\ux ) \times \cL^\ue_1 \times \ldots \times \cL^\ue_{\ke } \times \cL^\uf_1 \times \ldots \times \cL^\uf_{\kf}  \]
where $(\cU(\TT^{\kx}),\cF_\ux)$ is a foliation of the full torus $( \cU(\TT^{\kx}),\cF_\ux ) \subseteq \RR^{2\kx}$ by tori $\TT^{\kx}$.

 \item There exists partial \act coordinates on $\cV(\Lambda)$ : there exists a diffeomorphism $\varphi$  on $\cV(\Lambda)$ such that
 \[ \varphi^* \omega = \sum_{i=1}^{\kx} d\theta_i \wedge dI_i + P^* \omega_1 \]
 where $(\theta_1,\ldots,\theta_{\kx},I_1,\ldots,I_{\kx})$ are the action-angle coordinates on $T^* \cT$ ($\cT$ being $F_X$-orbit in Eliasson-Miranda-Zung Theorem~\ref{theo:Eliasson_NF-ST_case}), and $\omega_1$ is a symplectic form on $\RR^{2(n-\kx)} \simeq \RR^{2(k_{\ue } + 2\kf)}$.
\end{enumerate}
\end{theorem}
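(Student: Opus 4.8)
The plan is to assemble the semi-global structure out of the purely local data supplied by the Eliasson-Miranda-Zung normal form (Theorem~\ref{theo:Eliasson_NF-ST_case}) and then to propagate that data over the whole leaf $\Lambda$ using properness of $F$ and topological stability (Assumption~\ref{assumption:topologically_stable}); the three assertions are proved in order, each feeding the next, since the torus action of $(1)$ supplies the symmetry used to build the product decomposition of $(2)$ and the partial \act coordinates of $(3)$. For assertion $(1)$ I would first identify the infinitesimal generators of the claimed action near the minimal-rank point $m$. There Theorem~\ref{theo:Eliasson_NF-ST_case} gives a symplectomorphism $\varphi_\Bbbk$ and a diffeomorphism $G_\Bbbk$ with $\varphi_\Bbbk^* F = G_\Bbbk \circ Q_\Bbbk$, so on $\cU_m$ the functions $\varphi_\Bbbk^*(\ue_i)$, $\varphi_\Bbbk^*(\uf_i^2)$ and the transverse components $f_{n-\kx+1},\dots,f_n$ are smooth, Poisson-commuting, and carry $2\pi$-periodic flows: the $\ue_i$ generate rotations, each focus-focus block contributes exactly one periodic generator through its pseudo-elliptic part $\uf_i^2$ (the pseudo-hyperbolic part $\uf_i^1$ is not periodic and does not enter the action), and the $\kx$ transverse components already generate the global $\TT^{\kx}$. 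This produces $\ke + \kf + \kx$ commuting periodic Hamiltonians near $m$.

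The step requiring genuine work is passing from this local statement to an action on all of $\cV(\Lambda)$. Each periodic Hamiltonian is, via $G_\Bbbk^{-1}$, a function of the globally defined moment map $F$, hence extends to the saturated neighborhood; what must be shown is that the period of its flow stays constant over the leaf. Since $F$ is proper the leaves are compact, and the locus where a given smooth Hamiltonian flow has a fixed period is open and closed in a connected leaf, so the local normalization near $m$ forces the common period $2\pi$ throughout. The generators therefore integrate to a commuting effective Hamiltonian $\TT^{\ke+\kf+\kx}$-action on $\cV(\Lambda)$, effective because their differentials are independent on the regular part, with the $\TT^{\kx}$ subaction locally free since $df_{n-\kx+1}\wedge\cdots\wedge df_n \neq 0$ at $m$ makes its orbit there $\kx$-dimensional. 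Maximality of $\ke+\kf+\kx$ follows from a dimension count: by non-degeneracy the isotropy data at $m$ is a Cartan subalgebra of $sp(2n,\RR)$, whose maximal compact torus has exactly this dimension, so no larger compact abelian group can act effectively while fixing $m$.

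For assertion $(2)$ I would use this torus action to spread the local model over the whole leaf. The normal form identifies a neighborhood of $m$ with the product of the elliptic and focus-focus model blocks $\cL^\ue_i,\cL^\uf_i$ times $\cU(\TT^{\kx})$, and Assumption~\ref{assumption:topologically_stable} guarantees that no critical values other than those of type $\Bbbk$ occur in $\cV(\Lambda)$, so the foliation away from $m$ is modelled exactly on the regular part of these blocks; in particular the focus-focus leaf is the pinched torus $\check{\TT}^2$ described after Definition~\ref{def:simple_leaf}. Using the $\TT^{\ke+\kf+\kx}$-action to trivialize the normal directions and gluing the local product charts along orbits yields the leaf-wise diffeomorphism displayed in the statement. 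This step is purely topological, so one only needs compatibility of the foliations, which holds block by block.

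Finally, for assertion $(3)$ I would apply the Liouville-Arnold-Mineur construction to the locally free $\TT^{\kx}$-subaction: integrating the period lattice of the transverse components produces $\kx$ action-angle pairs $(\theta_i,I_i)$ on a neighborhood of the orbit $\cT$, identifying that part symplectically with $T^*\cT \cong T^*\TT^{\kx}$. An equivariant Darboux (Marle--Guillemin--Sternberg) argument then splits $\varphi^*\omega$ as $\sum_{i=1}^{\kx} d\theta_i\wedge dI_i + P^*\omega_1$, with $\omega_1$ the induced symplectic form on the transverse factor $\RR^{2(\ke+2\kf)}$. The main obstacle throughout is the local-to-semiglobal propagation in assertion $(1)$: everything else is either the standard action-angle machinery or a bookkeeping of the model blocks, whereas controlling the period lattice and the torus action across the singular, pinched part of the leaf is precisely where properness and topological stability are indispensable.
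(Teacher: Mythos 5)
First, a point of reference: the paper contains no proof of this theorem. It is imported wholesale from Zung~\cite{Zung-SymplecticTopology_I-1996} (with the orbital normal form of~\cite{MirandaZung-Equivariantnormalform-2004} as input), so there is nothing internal to compare your sketch against; it has to be measured against Zung's actual arguments. On that standard, your treatment of assertion (1) follows the right route but contains a repairable slip: the locus where a given Hamiltonian flow is $2\pi$-periodic is closed, but it is \emph{not} open in general (a flow can be periodic on one torus and have nearby tori with irrational frequency ratios), so ``open and closed in a connected leaf'' does not run as stated. The argument that does work --- and that this paper itself reuses inside the proof of Theorem~\ref{theo:AA_coord_around_FF_value} --- is that each extended generator $p_i = (G_\Bbbk^{-1}\circ F)_i$ Poisson-commutes with all components of $F$, so its time-$2\pi$ map commutes with the joint flow of $F$, which is transitive on each compact regular leaf; being the identity on $\cU_m \cap \Lambda_\Bv$ therefore forces it to be the identity on all of $\Lambda_\Bv$, every leaf in a small saturated \neigh meets $\cU_m$ because $F(\cU_m)$ is a \neigh of $F(m)$, and continuity plus density of regular leaves handles the singular fibers. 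Your maximality count via the compact part of the Cartan subalgebra ($\ke+\kf$ compact generators plus the $\kx$ transverse ones) is the correct heuristic, though turning it into a proof requires linearizing an arbitrary effective torus action at the fixed orbit, which you do not address.

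The genuine gap is assertion (2). The leaf-wise product decomposition is the central theorem of Zung's paper, and your proposal disposes of it in three sentences by ``gluing the local product charts along orbits.'' This presumes exactly what has to be proved: the singular leaf $\Lambda$ is not a single orbit but a union of orbits of different ranks (in the focus-focus case, the critical orbit together with the homoclinic ones sweeping out the pinched torus $\check{\TT}^2$), the Eliasson--Miranda--Zung charts exist only near each critical orbit, and the transition maps between overlapping charts along $\Lambda$ need not respect the product structure. There is moreover a global obstruction: the normal-form data can twist as one travels around the cycles of the singular leaf --- this twisting is precisely the mechanism that produces the monodromy computed later in the paper --- so one must prove that it can be undone by a foliation-preserving diffeomorphism, which is where topological stability enters as a genuinely load-bearing hypothesis (the decomposition can fail without it) rather than the bookkeeping role you assign it. A related, smaller gloss occurs in (3): Theorem~\ref{theo:Eliasson_NF-ST_case} yields the splitting $\sum_{i=1}^{\kx} d\theta_i \wedge dI_i + P^*\omega_1$ only on a \neigh of the orbit $\cT$, and extending the pairs $(\theta_i, I_i)$ to all of $\cV(\Lambda)$ requires triviality of the $\TT^{\kx}$-bundle over the local leaf space, which you assert implicitly. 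In sum: (1) is essentially Zung's argument modulo the periodicity slip, but (2) --- the heart of the theorem --- is asserted rather than proved.
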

 
 In Definition~\ref{def:simple_leaf}, the description of the simple focus-focus leaf is actually a consequence of the existence of a Hamiltonian $S^1$-action on a tubular \neigh of a simple focus-focus singularity. It is important to note that in statement 2. of Theorem~\ref{theo:A-L_with_sing}, the decomposition is at best leaf-wise diffeomorphic, but not symplectomorphic. 

\section{Proof of the main result}
 
 We solve the case in Theorem~\ref{theo:AA_coord_around_FF_value}, and complete it with a comment on the case where some components are elliptic.
 
 \subsection{The transversally focus-focus case}
 
 Let us consider a $FF-X^{n-2}$ singularity $\cU(\Lambda_0)$ of our foliation $\cF$. We already know with item 3. of Theorem~\ref{theo:A-L_with_sing} that there exists partial \act on $\cU(\Lambda)$ that are well defined. They are \act coordinates associated to the $\TT^{n-2}$-action induced by the transverse components of $F$. For the two other coordinates, we have that

\begin{enumerate}
 \item one cannot define \act coordinates on the  of $\cF$,
 \item one can only define \act coordinates canonically on the set of regular leaves $L_{X^n} (\cU(\Lambda))$, if $V_{X^n}(\cU(\Lambda))$ is simply connected: this is the monodromy phenomenon. 
\end{enumerate}

To show these two points, we define a complete set of \act coordinates on the regular tori foliation near a focus-focus singularity, and give the asymptotics of the Action coordinate near the critical point. To prove the theorem, we follow and generalize each step of the proof of Section~3 in \cite{SVN-semiglobalinvariants-2003}. During the proof will arise what causes the monodromy phenomenon.

\begin{proof}{of Theorem}~\ref{theo:AA_coord_around_FF_value}

With Zung's theorem, we can always take $\cU(\Lambda_0)$ small enough so that for all $\Bv \in F(\cU(\Lambda_0))$, there is a unique leaf in $\cU(\Lambda_0)$, that is, for the restricted system $(M,\omega,\cF|_{\cU(\Lambda_0)})$ the fibers are connected. We then define, for $\Bv \in F(\cU(\Lambda_0))$, $\Lambda_\Bv = F^{-1}(\Bv)$.


Item $2.$ of Theorem~\ref{theo:A-L_with_sing} gives us the topological (and differential) description of the foliation on the singularity $\cU(\Lambda_0)$, while Eliasson-Miranda-Zung normal form (Theorem~\ref{theo:Eliasson_NF-ST_case}) describes it symplectically, but only on a \neigh $\cU_{MZ}$ of $m$, $\cU_{MZ}$ stable by the flow of $\check{F}$. On $\cU_{MZ}$ there is a local symplectomorphism $\varphi: \cU_{MZ}  \to T^* \RR^2 \times T^* \TT^{n-2}$ and a local diffeomorphism $G:\RR^n \to \RR^n$  such that, for $Q_{FF-X^{n-2}} := (f^1,f^2,\xi_1,\ldots,\xi_{n-2})$, 

\[ \varphi^* F  = G \circ Q_{FF-X^{n-2}} \]

From this form, we can see first that locally $\Gamma$ is indeed a $\kx$-dimensional submanifold.


Now let us have a point $A_0 \in \cU_{MZ} \cap \Lambda_0$ different than $m$: $A_0$ is on the same critical fiber as $m$, and near enough so that Eliasson-Miranda-Zung normal form can be used. We then set $\Sigma^n$ a (small enough) $n$-dimensional submanifold of $M$ which intersects transversally the foliation $\cF$ at $A_0$, with $V_{FF-X^{n-2}}(\cU(\Lambda_0)) \cap \Sigma^n = \emptyset$. We set $\Omega := \{ \Lambda_c \in \cF \ |\  \Lambda_c \cap \Sigma^n \neq \emptyset \}$. The open set $\Omega$ is in $\cU(\Lambda_0)$.

We have that $G^{-1} \circ F$ is a global moment map for the foliation $\cF$ on $\Omega$. On $\cU_{MZ}$, $G^{-1} \circ F = (q_1,q_2,\xi_1,\ldots,\xi_{n-2})$, so $G^{-1} \circ F$ is an extention of $Q_{FF-X^{n-2}}$ to $\Omega$. We can now simply forget $G^{-1}$ and just take $F$ for a global momentum map that extends $Q_{FF-X^{n-2}}$, and restrict the system to $\Omega \simeq W$, with $\Gamma \simeq W_0$.

\ouf

For all critical leaves in $L_{FF-X^{n-2}}(\Omega)$, all the $f_1$-orbits are homoclinic orbits. Since $q_2,\xi_1,\ldots,\xi_{n-2}$ have $2\pi$-periodic flows on $\cU_{MZ}$ and since for their extension $f_2,\ldots,f_n$, all the $f_i$'s Poisson-commute, $\check{F}$ yield a $\TT^{n-1}$-action on $\Omega$ that commutes with the flow of $f_1$. The $\TT^{n-2}$-action induced by $\check{F}^2$ is free everywhere on $\Omega$ as told by item.1 of Theorem~\ref{theo:A-L_with_sing}, while the $S^1$-action of $f_2$ is free on $P_{X^n}(\Omega)$.

Since the leaves of $\cF$ are compact, for $A \in \Sigma^n \setminus L_{FF-X^{n-2}}(\Omega)$ we can define $A'$ the point of first intersection of the $f_1$-orbit of $A$ with the orbit by $\check{F}$, and $\tau_1$ the time of intersection. Let $(\tau_2,\ldots,\tau_n)$ be the multi-time needed to come back from $A'$ to $A$ with the flows of $f_2,f_3,\ldots,f_n$, hence closing the trajectory

\[ \phi_{f_2}^{\tau_2} \circ \cdots \circ \phi_{f_n}^{\tau_n} (A') = A. \]

Since the joint flow of $F$ is transitive these times depend only of the Lagrangian torus and not of $A$, and thus, only of the values $\Bv$ of $F$. For any regular value $\Bv$, the set of all the $(\alpha_1,\ldots,\alpha_n) \in \RR^n$ such that $\alpha_1 \chi_{f_1}(\Bv) + \ldots + \alpha_n \chi_{f_n} (\Bv)$ has a $1$-periodic flow is a sublattice of $\RR^n$ called the period lattice. The following matrix

\[ \cR = 
\begin{pmatrix}
\tau \\
r_2 \\          
\vdots \\
\vdots \\
r_n 
\end{pmatrix} =  \begin{pmatrix}
    \tau_1 (v) & \cdots &\cdots & \cdots & \tau_n (v) \\
    0 & 2\pi & 0 & \cdots & 0 \\
    \vdots & 0 & \ddots & \ddots & \vdots \\
    \vdots & \vdots &\ddots & \ddots & 0 \\
    0 & 0 & \cdots & 0 & 2\pi
    \end{pmatrix}
\]
forms a $\ZZ$-basis of the period lattice ($\tau_1 > 0$ by definition). These vectors can also be seen as a basis of cycles of the Lagrangian tori foliation $\cF$ on $\Omega$. The next proposition proves the second item of Theorem~\ref{theo:AA_coord_around_FF_value}: it gives the singular behavior of the basis as $\Bv$ tends to $\Gamma$.

\begin{proposition} \label{prop:regular_1-form}
 Let us fix a determination of the complex logarithm: $\ln(w)$, where $w = v_1 + i v_2$. Then the following quantities 

\begin{itemize}
 \item $\sigma_1 (\Bv) = \tau_1(\Bv) + \Re e (\ln (w)) \in \RR$,
 \item $\sigma_2 (\Bv) = \tau_2(\Bv) - \Im m (\ln (w)) \in {\RR}/{2\pi\ZZ} $,
 \item $\sigma_3 (\Bv) = \tau_3(\Bv) \in \RR / 2\pi\ZZ \ ,\ldots,\ \sigma_n (\Bv) = \tau_n(\Bv) \in {\RR}/{2\pi\ZZ}$,
\end{itemize}

are defined on $W \setminus W_0$ and can be extended to smooth and single-valued functions of $\Bv$ in $W$. Moreover, the differential form 

\[ \sigma = \sum_{i=1}^n \sigma_i dv_i \] is closed in $W$.

\end{proposition}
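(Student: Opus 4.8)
The plan is to reduce the whole computation to the two-dimensional focus-focus factor, where the complex logarithm is produced by the loxodromic dynamics, and then to read off closedness from the fact that period times are gradients of action integrals; this follows and generalises Section~3 of \cite{SVN-semiglobalinvariants-2003} fibrewise over the transverse torus. First I would separate the regular from the singular directions. Among the rows of $\cR$, the transverse times $\tau_3,\ldots,\tau_n$ are the return times of the genuine $\TT^{n-2}$-action generated by $(f_3,\ldots,f_n)$; by item~3 of Theorem~\ref{theo:A-L_with_sing} this action extends to smooth partial \act coordinates across $\Gamma$, so each $\tau_j$ ($j\geqslant 3$) is already smooth and single-valued on all of $W$ and needs no correction. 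All singular and multivalued behaviour is therefore carried by the pair $(\tau_1,\tau_2)$, governed by the $\CC^\ast$-dynamics of the focus-focus block, so over the $\TT^{n-2}$-directions the question becomes fibrewise the two-dimensional situation of \cite{SVN-semiglobalinvariants-2003}.

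Next I would carry out the inner computation in the Eliasson--Miranda--Zung chart $\cU_{MZ}$ (Theorem~\ref{theo:Eliasson_NF-ST_case}). By Table~\ref{tab:properties_of_crit_points} the combined flow of $(f^1,f^2)$ on the $\CC^2$-factor is the loxodromic action $(z_1,z_2)\mapsto(e^{-t_1+it_2}z_1,e^{t_1+it_2}z_2)$, which preserves $w=\bar z_1 z_2 = v_1+iv_2$ on each leaf. Splitting a closed orbit of the $\tau$-cycle into its portion inside $\cU_{MZ}$ and its portion outside, the outer portion contributes a time depending smoothly on $\Bv$ up to $W_0$, while the inner crossing time between two fixed sections of $\cU_{MZ}$ is obtained by integrating $dz/z$ along the orbit; this produces a complex return time $\zeta=\tau_1\mp i\tau_2$ of the form $\zeta(\Bv)=-\ln w + r(\Bv)$ with $r$ smooth up to $W_0$ and uniform in $v_3,\ldots,v_n$. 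Taking real and imaginary parts gives $\tau_1=-\re\ln w + (\text{smooth})$ and $\tau_2=\im\ln w + (\text{smooth})$, the signs being fixed by the orientation conventions on the flows; this is exactly the combination removed in the definitions of $\sigma_1,\sigma_2$, so these extend smoothly to $W$. Single-valuedness is then a monodromy matching: as $\Bv$ encircles $W_0$ the vanishing cycle undergoes the Picard--Lefschetz jump $\tau\mapsto\tau+r_2$, so $\tau_2$ increases by $2\pi$, exactly as $\im\ln w=\arg w$ does, and the two jumps cancel in $\sigma_2$; likewise the divergence $\tau_1\to+\infty$ is absorbed by $\re\ln w=\ln\abs{w}\to-\infty$.

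For closedness I would introduce a primitive $\alpha$ of $\omega$ on the regular region $L_{X^n}(\Omega)$ and use the classical period--action identity. Writing $\chi_{f_j}$ for the Hamiltonian field of $f_j$ and using $\lie_{\chi_{f_j}}\alpha=\iota_{\chi_{f_j}}\omega+d(\iota_{\chi_{f_j}}\alpha)=-dv_j+d(\iota_{\chi_{f_j}}\alpha)$, one checks that for the cycle $\gamma_i(\Bv)$ carrying the period vector given by the $i$-th row of $\cR$ the action $\cA_i(\Bv)=\oint_{\gamma_i(\Bv)}\alpha$ satisfies $d\cA_i=\sum_j(\cR)_{ij}\,dv_j$. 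In particular $\sum_j\tau_j\,dv_j=d\cA_1$ is closed on $W\setminus W_0$. Since $\re\ln w\,dv_1-\im\ln w\,dv_2=\re(\ln w\,dw)=d\,\re(w\ln w-w)$ is also closed there, the form $\sigma=d\cA_1+\re(\ln w\,dw)$ is closed on $W\setminus W_0$; by the smooth extension just established and the fact that $W_0$ has codimension $2$, the continuous $2$-form $d\sigma$ vanishes on the dense set $W\setminus W_0$ and hence on all of $W$. As a byproduct, integrating $\sigma=dS$ recovers $\cA_1=S-\re(w\ln w-w)$, the singular action of item~2 of Theorem~\ref{theo:AA_coord_around_FF_value}.

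The main obstacle is the inner estimate: one must show that, after subtracting the logarithm, the remainder $r(\Bv)$ is genuinely smooth up to and including $W_0$ --- not merely bounded --- and uniformly so in the transverse directions $v_3,\ldots,v_n$. This requires controlling the matching of the inner loxodromic model to the outer regular flow across $\partial\cU_{MZ}$ and verifying that the $\TT^{n-2}$-directions neither introduce further singularities nor spoil the smoothness of $r$; it is precisely here that the topological-stability assumption (Assumption~\ref{assumption:topologically_stable}) and the decoupling furnished by the Miranda--Zung normal form (Theorem~\ref{theo:Eliasson_NF-ST_case}) are essential.
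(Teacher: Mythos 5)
Your proposal is correct and follows essentially the same route as the paper: you split each return time into an outer part, smooth in $\Bv$ by transversality of the sections, and an inner part computed exactly from the loxodromic flow in the Eliasson--Miranda--Zung chart, then get closedness from the Mineur action--period identity $d\cA=\sum_i \tau_i\,dv_i$ (which the paper proves via Weinstein's deformation lemma, where you sketch it with Cartan's formula) and extend across the codimension-two set $W_0$ by continuity. The one place you diverge is your closing ``main obstacle'': in the paper no remainder estimate is needed, because with both transversals chosen inside $\cU_{MZ}$ as $\{\abs{z_1}=\varepsilon\}$ and $\{\abs{z_2}=\varepsilon\}$ the inner time is exactly $\tau^{B,A}_1+i\tau^{B,A}_2=\ln(\varepsilon^2)-\ln(\bar w)$ (using $\bar{b_1}b_2=w$), so your $r(\Bv)$ is the constant $\ln(\varepsilon^2)$, all smooth dependence sits in the outer time $\tau^{A,B}$, and independence of $(v_3,\ldots,v_n)$ comes from the direct-product structure of the normal form rather than from topological stability.
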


\begin{proof}
 
We fix some small $\varepsilon >0$ and we set 
\[ \Sigma_u^\Balpha := \{ z_1 = \varepsilon, z_2 \text{ small }, \Btheta = (\theta_1,\ldots,\theta_{\kx}) = \Balpha , \Bxi \in [-\varepsilon,\varepsilon]^{\kx} \} \subseteq \cU_{MZ} \]
\[ \Sigma_s^\Balpha := \{ z_1 \text{ small }, z_2 = \varepsilon, \Btheta = (\theta_1,\ldots,\theta_{\kx}) = \Balpha , \Bxi \in [-\varepsilon,\varepsilon]^{\kx} \} \subseteq \cU_{MZ} .\]

These are stable and unstable local submanifolds for the dynamic of the $f_1$-flot near a critical point $m^{\Balpha}_{\Bxi} = (z_1 = 0,z_2 = 0,\Btheta = (\theta_1,\ldots,\theta_{n-2}) = \Balpha , \Bxi) \in CrP_{FF-X^{n-2}}(\Omega)$. They are $n$-dimensional submanifolds intersecting transversally the foliation $\cF$ on $\Omega$. Thus, the intersections $A(\Bv,\Balpha) := \Lambda_\Bv \cap \Sigma_u^\Balpha$ and $B(\Bv,\Balpha) := \Lambda_\Bv \cap \Sigma_s^\Balpha$ are points of $M$ in the same $\chi_{f_1}$-orbit for $\Bv \in V_{X^n}(\Omega)$, at least for $A(\Bv,\Balpha)$ and $B(\Bv,\Balpha)$ in $L_{X^n}(\Omega)$. They are well-defined smooth functions of $\Bv$ and $\Balpha$.

The $\TT^{n-1}$-orbits of $A(\Bv,\Balpha)$ and $B(\Bv,\Balpha)$ are transversal to the Hamiltonian flow of $f_1$, thus one can define $\tau^{A,B}_1(\Bv,\Balpha)$ as the time necessary for the Hamiltonian flow of $f_1$ starting at $A(\Bv,\Balpha)$ (which flows outside of $\cU_{MZ}$), to make first hit to the $\TT^{n-1}$-orbit of $B(\Bv,\Balpha)$. Let us call this first hit $B'=(b_1',b_2',\Balpha + \Btheta,\Bxi)$. Since the $\TT^{n-1}$-orbit of $B$ is in $\cU_{MZ}$, we know that in it $f_2 = q_2$, $f_3 = \xi_3$, $\ldots$,$f_n = \xi_n$, so we have the explicit expression for the time needed to get back to $B$, which we call $\tau^{A,B}_2(\Bv,\alpha)$, $\ldots$, $\tau^{A,B}_n(\Bv,\alpha)$:
\[ \tau^{A,B}_2(\Bv,\Balpha) = \arg(b_1') \text{ and for } j=3 \ldots n \ ,\ \tau^{A,B}_j(\Bv,\Balpha) = 2\pi - \theta_j' \]

Since the $f_i$'s commute, the $\tau^{A,B}_j$ are smooth, single-valued functions of $\Bv$ only. We can now interchange the roles of $A$ and $B$, and thus, of $\Sigma_u^\Balpha$ and $\Sigma_s^\Balpha$, to define the times $\tau^{B,A}_j (v)$ for $j=1,\ldots,n$. The joint flow of $F$ now takes place inside $\cU_{MZ}$ where $L_{FF-X^{n-2}}(\Omega) \cap \cU_{MZ}$ is a codimension-$2$ manifold, so for $\Bv \in  \Gamma$, the quantities $\tau^{B,A}_1 (v),\tau^{B,A}_2 (v),\ldots,\tau^{B,A}_n (v)$ cannot be defined a priori.

However, one can see that in the definition, $\tau^{B,A}_1 (v)$ and $\tau^{B,A}_2 (v)$ do not depend  of the value of $(f_3,\ldots,f_n)$: in Eliasson-Miranda-Zung theorem, the local model is a \emph{direct product} of the Eliasson normal form for the focus-focus and the \act coordinate for the transversal component. Moreover, since everywhere it is defined, for $j = 3,\ldots,n, \tau^{B,A}_j (v) = 0$, its limit when $\Bv \to \Gamma$ must be $0$ also.

With the explicit formulaes of the Hamiltonian flow of $q_1$ and $q_2$ given in Table~\ref{tab:properties_of_crit_points}, we know that $\tau^{B,A}_1 (v)$ and  $\tau^{B,A}_2 (v)$ satisfy the following equation:

\begin{equation} \label{eq:tau}
(e^{\tau^{B,A}_1 + i \tau^{B,A}_2 } b_1, e^{-\tau^{B,A}_1 + i \tau^{B,A}_2 } b_2,\Btheta,\Bxi) = (a_1,a_2,0,\ldots,0,0,\ldots,0) . 
\end{equation}

We also have the equations: $a_1 = \varepsilon \ ,\ b_2 = \varepsilon \text{ and } \bar{a_1} a_2 = \bar{b_1} b_2 = w $. Here we introduce our determination of the complex logarithm to give the solution of~\ref{eq:tau}: 

\[ \tau^{B,A}_1 + i \tau^{B,A}_2  = \ln(\frac{a_1}{b_1}) = \ln(\varepsilon . \frac{\varepsilon}{\bar{w}}) = \ln(\varepsilon^2) - \ln(\bar{w}) .\]

Writing now $ \tau_1 + i \tau_2 =  (\tau^{A,B}_1 + \tau^{B,A}_1) + i (\tau^{A,B}_2 + \tau^{B,A}_2)$, we can refer to the statement announced on Proposition~\ref{prop:regular_1-form} concerning $\sigma_1$ and $\sigma_2$:

\[
\begin{aligned} 
 \sigma_1 + i \sigma_2 & = \tau_1(\Bv) + \Re e (\ln (w)) + i(\tau_2(\Bv) - \Im m (\ln (w))) \\
  \ & = \tau^{A,B}_1 + i \tau^{A,B}_2 + \tau^{B,A}_1 + i \tau^{B,A}_2 + \ln(\bar{w})  \\
  \ & = \tau^{A,B}_1 + i \tau^{A,B}_2 + \ln(\varepsilon^2) - \ln(\bar{w}) + \ln(\bar{w})  \\
  \ & = \tau^{A,B}_1 + i \tau^{A,B}_2 + \ln(\varepsilon^2) .
\end{aligned}
\]

This last quantity is smooth with respect to $\Bv$. Since for $j = 3,\ldots,n$, $\sigma_j (\Bv) = \tau_j (\Bv) = \tau^{A,B}_j (\Bv)$ is also smooth, this shows the first statement of Proposition~\ref{prop:regular_1-form}.

\ouf

Let us now show that for regular values, the 1-form $\tau = \sum_{i=1}^n \tau_i dv_i $ is closed. For this we fix a regular value $\Bc \in V_{X^n}(\Omega)$ and introduce the following action integral

\[ \cA (\Bv) = \int_{\gamma_\Bv} \alpha \] 
with $\alpha$ a Liouville 1-form defined on a tubular \neigh $\cV(\Lambda_\Bc) \subseteq \Omega$ of $\Lambda_\Bc$ ($\omega = d\alpha$), while, for $\Bv \in U:= F(\cV(\Bc))$,  $\Bv \mapsto \gamma_\Bv \subseteq \Lambda_\Bv$ is a smooth family of loops with the same homotopy class in $\Lambda_\Bv$ as the joint flow of $F$ at the times $(\tau_1 (\Bv), \tau_2 (\Bv), \ldots, \tau_n(\Bv))$. The integral $\cA$ only depends of $\Bv$ as $\gamma_\Bv \subseteq \Lambda_\Bv$, which is Lagrangian (this is another statement of Mineur formula).

A consequence of Darboux-Weinstein that one can found in~\cite{Weinstein-SymplecticmanifoldsandLagrangian-1971}, is the following general lemma

\begin{lemma} \label{lemma:identify_Lag_1-form}
 Each Lagrangian submanifold in a tubular \neigh $\cV(\Lambda_\Bc)$ can be canonically associated with a closed 1-form on $\Lambda_\Bc$.
\end{lemma}


\begin{proof}
 The exponential map provides a diffeomorphism between $\cV(\Lambda_\Bc)$ and the normal bundle 
\[ N \Lambda_\Bc = \bigsqcup_{p \in \Lambda_\Bc} \! \frac{T_p M}{T_p \Lambda_\Bc}. \] 
The latter can be identified with $T^* \Lambda_\Bc$ using the symplectic form: for $m \in \Lambda_\Bc$ and $X_m \in T_m M$, we define 
\[ \tilde{\omega}[X_m]_m := \left. (\imath_{X_m} \omega_m) \right|_{T \Lambda_\Bc} \in T^*_m \Lambda_\Bc .\]

Since $T_m M = T_m \Lambda_\Bc \oplus N_m \Lambda_\Bc$ with $\Lambda_\Bc$ Lagrangian, the map 
\[ \begin{aligned} TM & \to T^* \Lambda_\Bc \\ X & \mapsto \tilde{\omega}[X] \end{aligned} \]
is linear, and $\tilde{\omega}[X]$ is non-zero if and only if the projection of $X$ on $N \Lambda_\Bc$ is non-zero as a vector field. 

Now, an infinitesimal deformation of the submanifold $\Lambda_\Bc$ is a vector field of $\cV(\Lambda_\Bc)$ transversal to $\Lambda_\Bc$, that is, a section $X \in N \Lambda_\Bc$. This infinitesimal deformation is to be performed in the space of Lagrangian submanifolds and only if 

\[\begin{aligned}
    \ddt \left. \left. \left[ {\phi_X^t}^{ *}  \omega \right] \right|_{t=0} \right|_{T\Lambda_\Bc \times T\Lambda_\Bc} & = 0 \\
    \left[ \Lie_X \omega \right]_{T\Lambda_\Bc \times T\Lambda_\Bc} & = 0 \\
\left.  \left[ d\imath_X \omega + \imath_X \underbrace{d\omega}_{= 0} \right]
\right|_{T\Lambda_\Bc \times T\Lambda_\Bc} & = 0 \\
    d \left[ (\imath_X \omega)|_{T\Lambda_\Bc} \right] & = d \tilde{\omega}[X] = 0,
  \end{aligned}
\]
that is, if and ony if the associated 1-form $\tilde{\omega} [X]$ is closed, if and only if $X$ is locally Hamiltonian in our case.

\end{proof}

In our case the foliation $\cF = (\Lambda_\Bv)_{\Bv \in U}$ is given by the fibers of the moment map $F|_U$. Its deformation map is $\Bv \mapsto \Lambda_\Bv$ and the associated 1-form to the infinitesimal deformation verifies

\[ \left. \tilde{\omega} \left[ \frac{\partial }{\partial v_i} \Lambda_\Bv \right] \right|_{\Bv=\Bc} = \kappa_i(\Bc) \] 
where $\kappa_i$ is the closed 1-form on $\cV(\Lambda_\Bc)$ defined by: $\imath_{\chi_{f_j}} \kappa_i = \delta_{i,j}$. In other words, the integral of $\kappa_i$ along a trajectory of the flow of $F$ measures the increasing of time $t_j$ along this trajectory. We now show the following formula linking the variation 1-form to the infinitesimal variation of the action $\cA$ 

\begin{equation} \label{equ:deformation=action}
 \frac{\partial \cA}{\partial v_i} (\Bc) = \int_{\gamma_\Bc} \kappa_i
\end{equation}

by proving that 
\[ \left. \frac{\partial }{\partial v_i} (\gamma_\Bv^* \alpha) \right|_{\Bv=\Bc} \text{ and } \gamma_\Bc^* \kappa_i \; \text{are cohomologous on} \; S^1. \]

We have

\[ \frac{\partial}{\partial v_i} (\gamma_\Bv^* \alpha) = \gamma_\Bv^* \left[ \Lie_{\frac{\partial \gamma_\Bv}{\partial v_i}} \alpha \right] 
= \gamma_\Bv^* \left[ \imath_{\frac{\partial \gamma_\Bv}{\partial v_i} } d\alpha + d \imath_{\frac{\partial \gamma_\Bv}{\partial v_i}} \alpha \right]\]

To each $\gamma_\Bv$ corresponds a unique Lagrangian submanifold $\Lambda_\Bv$, hence for all $p \in  \gamma_\Bc$, $\frac{\partial \gamma_\Bv}{\partial v_i} |_{\Bv=\Bc} (p) = \frac{\partial }{\partial v_i} \Lambda_\Bv |_{\Bv=\Bc} (p)$. The vector splits into two components $X^t_\Bc (p)$ and $X^n_\Bc (p)$, with $X^t_\Bc (p) \in T_p \Lambda_\Bc$ and $X^n_\Bc (p) \in N_p \Lambda_\Bc$. The normal vector $X^n_\Bc $ is by definition the infinitesimal deformation of $\cF$ at $\Bc$ along the direction $\frac{\partial }{\partial v_i}$, that is, $\kappa_i$. We then have

\[ \left. \left( \gamma_\Bv^* \left[ \imath_{\frac{\partial \gamma_\Bv}{\partial v_i} } d\alpha \right] \right) \right|_{\Bv = \Bc}
= \gamma_\Bc^* \left( \underbrace{\imath_{X^t_\Bc} d\alpha}_{= 0 \text{ on } T_p \Lambda_\Bc } + \imath_{X^n_\Bc} \underbrace{d\alpha}_{ = \omega} \right) = \gamma_\Bc^* \tilde{\omega}[\frac{\partial}{\partial v_i} \Lambda_\Bv]|_{\Bv = \Bc}  = \gamma_\Bc^* \kappa_i. \]

Thus $ \left. \frac{\partial }{\partial v_i} (\gamma_\Bv^* \alpha) \right|_{\Bv=c} - \gamma_\Bc^* \kappa_i = d \gamma_\Bc^*\alpha(\frac{\partial \gamma_\Bv}{\partial v_i}) $ is exact: the two 1-forms are cohomologous.
\end{proof}

 Since $\gamma_\Bc$ has the same homotopy class as the joint flow of $F$ at the times $(\tau_1 (\Bc), \tau_2 (\Bc),\ldots,\tau_n (\Bc))$, we have

\[ d\cA (\Bc)= \sum_{i=1}^n \frac{\partial \cA}{\partial v_i}(\Bc) dv_i = \sum_{i=1}^n \tau_i(\Bc) dv_i = \tau(\Bc) \]

Thus, we can now forget about $\Bc$: $\tau$ is a closed 1-form of the variable $\Bv$, $\cA$ is the action integral, defined for all $\Bv \in F(L_{X^n}(\cU(\Lambda_0)))$. The function $\ln (w)$ is also closed as a holomorphic 1-form of the variable $w$ and thus, $\sigma = \tau + \ln (w)$ is closed for all regular values, and by continuation, for all $\Bv \in F(\cU(\Lambda_0))$. Taking the primitive of $ln(w)$, we define the

\begin{definition}
 Let $S$ be the unique smooth function of $\Bv$ defined on $F(\cU(\Lambda_0))$ such that $dS = \sigma$ and $S(0) = 0$. The Taylor serie of $S$ in $(v_1,v_2)$ at $(0,0,v_3,\ldots,v_n)$ can be written as 
 \[ (S)^\infty = \sum_{j_1 = 0}^\infty \sum_{j_2 = 0}^\infty \frac{\partial S}{\partial v_1^{j_1} \partial v_2^{j_2} } (0,0,v_3,\ldots,v_n) v_1^{j_1} v_2^{j_2}\]
 
 In accordance with~\cite{SVN-semiglobalinvariants-2003} we call this double sum the symplectic invariant of the nodal locus $\Gamma$ and we have:
 
 \[ \cA (\Bv) = S(\Bv) - \Re e (w \ln w - w) \]
\end{definition}

This concludes the proof of Theorem~\ref{theo:AA_coord_around_FF_value}.

\end{proof}

\subsection{Computation of the monodromy}

In order to exhibit the monodromy phenomenon, we first mention that in~\cite{Wacheux-LocalmodelsofS-TIntSys-2014}, we show that $V_{X^n} (\Omega)$ is not simply connected. We take a circle 

\[ C := \{ C(t) = (v_1 \cos(t),v_2 \sin(t),v_3,\ldots,v_n) | t \in [0,2\pi] \} \subseteq F(M)\]

with $\Bv$ small enough so that it is in in $F(\Omega)$. Each point of $C$ is a regular value, so to each $t$ corresponds a $n$-torus. We can take any basis of cycle for each torus, that is, a $\ZZ$-basis for the period lattice. For $C(0)$, we take the basis $\cR$ introduced in the proof of Theorem~\ref{theo:AA_coord_around_FF_value}, and we follow it by homotopy as $t$ goes from $0$ to $2\pi$. From Proposition~\ref{prop:regular_1-form}, it turns out that as $t$ varies, $\tau$ is deformed as $\tau + (0,t,0,\ldots,0)$, the other vectors staying unchanged. Thus, at $t=2\pi$, we are back to $C(0)$, but our basis of covectors is now 

\[ \cR' = \begin{pmatrix} 
\tau + r_2 \\
r_2 \\
\vdots \\
r_n
\end{pmatrix}
\]

Hence, we can write the following corollary of Theorem~\ref{theo:AA_coord_around_FF_value}

\begin{corollary}
 With the same hypothesis as Theorem~\ref{theo:AA_coord_around_FF_value}, the topological monodromy matrix is:
 
 \[ \begin{pmatrix}
    1 & 1 & 0 & \ldots & 0 \\
    0 & 1 & 0 & \ldots & 0 \\
    0 & 0 & \ulcorner & \; & \urcorner \\  
    \vdots & \vdots & \; &  I_{n-2} & \; \\
    0  & 0 & \llcorner & \; & \lrcorner    
    
    \end{pmatrix}
 \]
 
\end{corollary}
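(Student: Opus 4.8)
The plan is to identify the topological monodromy with the automorphism of the period lattice obtained by transporting the basis $\cR$ from the proof of Theorem~\ref{theo:AA_coord_around_FF_value} once around a loop encircling the nodal locus $\Gamma$, and then to read the resulting lattice transformation directly off the logarithmic multi-valuedness recorded in Proposition~\ref{prop:regular_1-form}.

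First I would fix the loop. Since $V_{X^n}(\Omega)$ is not simply connected (by~\cite{Wacheux-LocalmodelsofS-TIntSys-2014}), I take $C(t) = (v_1\cos t, v_2 \sin t, v_3,\ldots,v_n)$ for $t \in [0,2\pi]$, with $\Bv$ small enough that $C \subseteq F(\Omega)$ consists entirely of regular values. In the variable $w = v_1 + i v_2$ this is a loop winding once around $w=0$ while $v_3,\ldots,v_n$ stay fixed. Over $C$ each fibre $\Lambda_{C(t)}$ is a regular Lagrangian $n$-torus, and the topological monodromy is by definition the element of $\GL(n,\ZZ)$ comparing a continuously transported $\ZZ$-basis of the period lattice at $t=0$ with the same basis after returning at $t=2\pi$.

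Next I would transport the basis $\cR = (\tau, r_2,\ldots,r_n)$ and track each cycle. The cycles $r_2,\ldots,r_n$ are the orbits of the global $\TT^{n-1}$-action generated by $\check F = (f_2,\ldots,f_n)$ (item~1 of Theorem~\ref{theo:A-L_with_sing}); as this action is defined on all of $\Omega$, these cycles are globally single-valued and return to themselves. Only the first-return cycle $\tau$, whose $f_1$-leg passes near the focus-focus point, can be affected, and its variation is dictated by Proposition~\ref{prop:regular_1-form}: there $\sigma_1 = \tau_1 + \re(\ln w)$ and $\sigma_2 = \tau_2 - \im(\ln w)$ are single-valued on $W$. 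Around $C$ one has $\re(\ln w) = \ln|w|$ unchanged and $\im(\ln w) = \arg w$ increased by $2\pi$; single-valuedness of $\sigma_1,\sigma_2$ (and of $\sigma_3,\ldots,\sigma_n$) then forces $\tau_1,\tau_3,\ldots,\tau_n$ to return while $\tau_2 \mapsto \tau_2 + 2\pi$, i.e.\ $\tau \mapsto \tau + (0,2\pi,0,\ldots,0) = \tau + r_2$.

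Finally I would assemble the matrix: expressing the transported basis $(\tau + r_2, r_2, r_3,\ldots,r_n)$ in terms of $(\tau,r_2,\ldots,r_n)$ produces exactly the stated matrix, with the unipotent block $\left(\begin{smallmatrix} 1 & 1 \\ 0 & 1 \end{smallmatrix}\right)$ coupling $\tau$ and $r_2$ and the identity $I_{n-2}$ on the transverse cycles; its determinant is $1$, as any period-lattice automorphism must be. The main obstacle is the third step above, namely rigorously matching the topological (homotopy) transport of the cycle $\tau$ with the analytic $+2\pi$ branch jump of $\arg w$: one must verify that this jump, and not some extra contribution from the $f_1$-leg of the trajectory, governs the continuation of $\tau$, which is precisely the content of the assertion that it is $\sigma$, and not $\tau$, that descends single-valuedly to $W$ in Proposition~\ref{prop:regular_1-form}. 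Once that translation is secured the matrix is immediate.
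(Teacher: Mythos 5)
Your proposal is correct and follows the paper's own argument essentially verbatim: the paper likewise transports the basis $\cR$ around the circle $C(t)=(v_1\cos t, v_2\sin t, v_3,\ldots,v_n)$ and reads off from Proposition~\ref{prop:regular_1-form} that $\tau$ is deformed to $\tau+(0,t,0,\ldots,0)$, so that at $t=2\pi$ one has $\tau\mapsto\tau+r_2$ with $r_2,\ldots,r_n$ unchanged, yielding the stated matrix. Your extra justification---that single-valuedness of $\sigma$ forces the $+2\pi$ jump in $\tau_2$, and that the cycles $r_2,\ldots,r_n$ are fixed because they come from the global $\TT^{n-1}$-action---is precisely the implicit content of the paper's one-line appeal to that proposition.
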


\subsection{The general case} \label{subsection:the_general_case}

Now for the general case , we consider a critical point $m$ of Williamson type $FF-X^{\kx}-E^{\ke}$ with $\ke \neq 0$. We can always assume the last $\ke$ components to be the elliptic ones. Using again Theorem~\ref{theo:A-L_with_sing}, we have that for $q_1 = q_2 = 0$ and $e_i \neq 0, i = 1..\ke$, the singularity is a $FF-X^{n-2}$ singularity. 

On the other hand, although we only have proved it for $2n=6$, we conjecture that

\[ \left( \cU_\Bbbk(\Lambda_0) := \cup_{\kx' \leq \kx } L_{\Bbbk'} (\cU(\Lambda_0)) , 
\omega_\Bbbk := \omega|_{T^*\cU_\Bbbk(\Lambda_0)}, F_\Bbbk = F|_{\cU_\Bbbk} \right) \]
is a semi-toric IHS. It is the stratification of the semi-toric system by the Williamson type, which shall be the subject of another article. If we admit the conjecture, we have that in this system, $m$ is a $FF-X^{\kx}$ critical point. We can hence apply the result above, and for $S_\Bbbk(v_1,v_2,v_3,\ldots,v_{\kx+2}) = S_\Bbbk(\Bv_\Bbbk)$ the regularized action coordinate associated to it, we have

\[ S_\Bbbk(\Bv_\Bbbk) = \lim_{\Bv \to (\Bv_\Bbbk,0)} S(\Bv). \]



Note that, while we can still describe the foliation for an elliptic singularity, we do not have \act coordinates: polar coordinates are not defined at the origin. Concerning the monodromy, we have for the restricted IHS the following matrix 

\[ 
\begin{pmatrix}
    1 & 1 & 0 & \ldots & 0 \\
    0 & 1 & 0 & \ldots & 0 \\
    0 & 0 & \ulcorner & \; & \urcorner \\  
    \vdots & \vdots & \; &  I_{\kx} & \; \\
    0  & 0 & \llcorner & \; & \lrcorner    
    
    \end{pmatrix}
 \]

\section{Conclusion and perspectives}

In this article, we have established the general formula for the Taylor invariant introduced by \SVN in~\cite{SVN-semiglobalinvariants-2003} and the monodromy matrix. Some of the results needed to prove that it is indeed an invariant are proved given in Taylor invariant we have provided several results using local and leaf-wise model of singular integrable systems. These techniques are helpful in less friendly settings. In almost-toric systems of higher complexity, for instance, we can apply the same tools. However, it is not clear what could be a general formulation of this result, to what extent we can simply mimetize the proof we did here.


 There are several global results in our research program (see \cite{PelayoSVN-ConstructingIntSysOfSemitoricType-2011} and \cite{PelayoRatiuSVN-SymplecticBifTheoryForIntegrableSystems-2011} for a description ) that necessitate more conceptual tools than the local models like we used here. In that sense, the aim of articles~\cite{Wacheux-about_image_s-t_mm-2014} and~\cite{Zung_Wacheux-Trop_strad:intrinsinc_convexity_of_A-T_Sys-2014} are to establish these result in a fit framework. 

\section*{Bibliography}



\begin{thebibliography}{PRVN11}

\bibitem[Ati82]{Atiyah-ConvexityandCommuting-1982}
M.~F. Atiyah.
\newblock Convexity and commuting hamiltonians.
\newblock {\em Bulletin of the London Mathematical Society}, 14(1):1--15, 1982.

\bibitem[BF04]{BolsinovFomenko-book}
A.~V. Bolsinov and A.~T. Fomenko.
\newblock {\em Integrable {H}amiltonian systems; Geometry, topology,
  classification}.
\newblock Chapman \& Hall, 2004.
\newblock Translated from the 1999 Russian original.

\bibitem[Cha86]{Chaperon-GeoDiff-SingSysDyn-Asterisque-1986}
M.~Chaperon.
\newblock {\em G{\'e}om{\'e}trie diff{\'e}rentielle et singularit{\'e}s de
  syst{\`e}mes dynamiques}.
\newblock Number 138-139. 1986.

\bibitem[Cha12]{Chaperon-SmoothFFaSimpleProof-2012}
M.~Chaperon.
\newblock Normalisation of the smooth focus-focus: a simple proof. with an
  appendix by jiang kai.
\newblock {\em Acta Mathematica Vietnamica}, page~8, 2012.

\bibitem[DD87]{DazordDelzant-Leproblemegeneral-1987}
P.~Dazord and T.~Delzant.
\newblock {Le probl\`eme g\'en\'eral des variables actions-angles.}
\newblock {\em J. Differ. Geom.}, 26:223--251, 1987.

\bibitem[Del88]{Delzant-Hamiltoniensperiodiqueset-1988}
T.~Delzant.
\newblock {Hamiltoniens p\'eriodiques et images convexes de l'application
  moment.}
\newblock {\em Bull. Soc. Math. Fr.}, 116(3):315--339, 1988.

\bibitem[Del90]{Delzant-ClassificationActions-1990}
T.~Delzant.
\newblock Classification des actions hamiltoniennes compl{\'e}tement
  int{\'e}grables de rang deux.
\newblock {\em Annals of Global Analysis and Geometry}, 8:87--112, 1990.
\newblock 10.1007/BF00055020.

\bibitem[DH82]{DuistermaatHeckman-variationincohomology-1982}
J.~J. Duistermaat and G.~J. Heckman.
\newblock On the variation in the cohomology of the symplectic form of the
  reduced phase space.
\newblock {\em Inventiones Mathematicae}, 69:259--268, 1982.
\newblock 10.1007/BF01399506.

\bibitem[DM91]{DufourMolino-CompactActionRnVarAA-1991}
J.-P. Dufour and P.~Molino.
\newblock Compactification d'actions de {${\bf R}^n$} et variables action-angle
  avec singularit\'es.
\newblock In {\em Symplectic geometry, groupoids, and integrable systems
  ({B}erkeley, {CA}, 1989)}, volume~20 of {\em Math. Sci. Res. Inst. Publ.},
  pages 151--167. Springer, New York, 1991.

\bibitem[DMT94]{DufourMolinoToulet-ClassIHSetInvFomenko-1994}
Jean-Paul Dufour, Pierre Molino, and Anne Toulet.
\newblock Classification des syst\`emes int\'egrables en dimension {$2$} et
  invariants des mod\`eles de {F}omenko.
\newblock {\em C. R. Acad. Sci. Paris S\'er. I Math.}, 318(10):949--952, 1994.

\bibitem[Dui80]{Duistermaat-globalactionangle-1980}
J.~J. Duistermaat.
\newblock On global action-angle coordinates.
\newblock {\em Communications on Pure and Applied Mathematics}, 33(6):687--706,
  1980.

\bibitem[dVV79]{ColinVey-LemmeMorseIsochore-1979}
Y.~Colin de~Verdiere and J.~Vey.
\newblock Le lemme de morse isochore.
\newblock {\em Topology}, 18(4):283 -- 293, 1979.

\bibitem[Eli84]{Eliasson-Thesis-1984}
L.~H. Eliasson.
\newblock {\em Hamiltonian systems with Poisson commuting integrals}.
\newblock PhD thesis, Stockholm, 1984.

\bibitem[Eli90]{Eliasson-NormalformsHamiltonian-1990}
L.~H. Eliasson.
\newblock {Normal forms for Hamiltonian systems with Poisson commuting
  integrals -- elliptic case}.
\newblock {\em Commentarii Mathematici Helvetici}, 65:4--35, 1990.

\bibitem[GS82]{GuilleminSternberg-ConvexitypropertiesI-1982}
V.~Guillemin and S.~Sternberg.
\newblock Convexity properties of the moment mapping.
\newblock {\em Inventiones Mathematicae}, 67:491--513, 1982.
\newblock 10.1007/BF01398933.

\bibitem[GS84]{GuilleminSternberg-ConvexitypropertiesII-1984}
V.~Guillemin and S.~Sternberg.
\newblock Convexity properties of the moment mapping. ii.
\newblock {\em Inventiones Mathematicae}, 77:533--546, 1984.
\newblock 10.1007/BF01388837.

\bibitem[KT01]{KarshonTolman-Centeredcomplexity1HamTorusAction-2001}
Y.~Karshon and S.~Tolman.
\newblock Centered complexity one {H}amiltonian torus actions.
\newblock 353(12):4831--4861 (electronic), 2001.

\bibitem[KT03]{KarshonTolman-CompleteinvariantsforHamT_actions_tall-2003}
Y.~Karshon and S.~Tolman.
\newblock Complete invariants for {H}amiltonian torus actions with two
  dimensional quotients.
\newblock {\em J. Symplectic Geom.}, 2(1):25--82, 2003.

\bibitem[KT11]{KarshonTolman-ClassificationofHamiltonian-2011}
Y.~Karshon and S.~Tolman.
\newblock {Classification of Hamiltonian torus actions with two dimensional
  quotients}.
\newblock {\em ArXiv e-prints}, September 2011.

\bibitem[LS10]{LeungSymington-Almosttoricsymplectic-2010}
N.~C. Leung and M.~Symington.
\newblock Almost toric symplectic four-manifolds.
\newblock {\em J. Symplectic Geom.}, 8(2):143--187, 2010.

\bibitem[MZ04]{MirandaZung-Equivariantnormalform-2004}
E.~Miranda and Nguyen~Tien Zung.
\newblock Equivariant normal form for nondegenerate singular orbits of
  integrable hamiltonian systems.
\newblock {\em Annales Scientifiques de l'{\'E}cole Normale Sup{\'e}rieure},
  37(6):819 -- 839, 2004.

\bibitem[PRVN11]{PelayoRatiuSVN-SymplecticBifTheoryForIntegrableSystems-2011}
A.~Pelayo, T.~S. Ratiu, and S.~V. V{\~u}~Ng\d{o}c.
\newblock {Symplectic bifurcation theory for integrable systems}.
\newblock {\em ArXiv e-prints}, August 2011.

\bibitem[PVN09]{PelayoSVN-Semitoricintegrablesystems-2009}
A.~Pelayo and S.~V{\~u}~Ng\d{o}c.
\newblock Semitoric integrable systems on symplectic 4-manifolds.
\newblock {\em Inventiones Mathematicae}, 177:571--597, 2009.
\newblock 10.1007/s00222-009-0190-x.

\bibitem[PVN11]{PelayoSVN-ConstructingIntSysOfSemitoricType-2011}
A.~Pelayo and S.~V{\~u}~Ng\d{o}c.
\newblock Constructing integrable systems of semitoric type.
\newblock {\em Acta Mathematica}, 206:93--125, 2011.

\bibitem[Sym01]{Symington-4from2-2001}
M.~Symington.
\newblock Four dimensions from two in symplectic topology.
\newblock {\em Proc. Sympos. Pure Math., 71, Amer. Math. Soc., Providence,
  RI.}, Topology and geometry of manifolds (Athens, GA, 2001)(71):153--208,
  2001.

\bibitem[Vey78]{Vey-SurCertainsSystemes-1978}
J.~Vey.
\newblock Sur certains systemes dynamiques separables.
\newblock {\em American Journal of Mathematics}, 100(3):pp. 591--614, 1978.

\bibitem[VN00]{SVN-BohrSommerfeld-2000}
S.~V{\~u}~Ng\d{o}c.
\newblock Bohr-{S}ommerfeld conditions for integrable systems with critical
  manifolds of focus-focus type.
\newblock {\em Comm. Pure Appl. Math.}, 53(2):143--217, 2000.

\bibitem[VN03]{SVN-semiglobalinvariants-2003}
S.~V{\~u}~Ng\d{o}c.
\newblock On semi-global invariants for focus-focus singularities.
\newblock {\em Topology}, 42(2):365--380, 2003.

\bibitem[VN07]{SVN-Momentpolytopessymplectic-2007}
S.~V{\~u}~Ng\d{o}c.
\newblock Moment polytopes for symplectic manifolds with monodromy.
\newblock {\em Advances in Mathematics}, 208(2):909 -- 934, 2007.

\bibitem[VNW13]{SVNWacheux-SmoothNF_for_IHS_near_ff_sing-2013}
S.~V{\~u}~Ng\d{o}c and C.~Wacheux.
\newblock Smooth normal forms for integrable hamiltonian systems near a
  focus-focus singularity.
\newblock {\em Acta Mathematica Vietnamica}, 38(1):107--122, 2013.

\bibitem[Wac14a]{Wacheux-about_image_s-t_mm-2014}
Wacheux.
\newblock About the image of semi-toric moment maps.
\newblock 2014.

\bibitem[Wac14b]{Wacheux-LocalmodelsofS-TIntSys-2014}
Wacheux.
\newblock Local model of semi-toric integrable systems.
\newblock 2014.
\newblock http://arxiv.org/abs/1408.1166.

\bibitem[Wei71]{Weinstein-SymplecticmanifoldsandLagrangian-1971}
A.~Weinstein.
\newblock Symplectic manifolds and their lagrangian submanifolds.
\newblock {\em Advances in Mathematics}, 6:329--346, 1971.

\bibitem[Wil36]{Williamson-OnAlgPbLinNF-1936}
J.~Williamson.
\newblock On the algebraic problem concerning the normal form of linear
  dynamical systems.
\newblock {\em American Journal of Mathematics}, 58(1):141--163, 1936.

\bibitem[Zun96]{Zung-SymplecticTopology_I-1996}
Nguyen~Tien Zung.
\newblock Symplectic topology of integrable hamiltonian systems. i :
  Arnold-liouville with singularities.
\newblock {\em Compos. Math.}, 101(2):179--215, 1996.

\bibitem[Zun97]{Zung-Anoteonfocusfocus-1997}
Nguyen~Tien Zung.
\newblock A note on focus-focus singularities.
\newblock {\em Differential Geometry and its Applications}, 7(2):123 -- 130,
  1997.

\bibitem[Zun02]{Zung-Anothernotefocus-2002}
Nguyen~Tien Zung.
\newblock Another note on focus-focus singularities.
\newblock {\em Letter in Mathematical Physics}, 60(1):87--99, 2002.

\bibitem[ZW14]{Zung_Wacheux-Trop_strad:intrinsinc_convexity_of_A-T_Sys-2014}
N.~T. Zung and Wacheux.
\newblock Tropical stradispace : intrinsic convexity of almost-toric systems.
\newblock 2014.

\end{thebibliography}

\end{document}